   \providecommand{\keywords}[1]{\textbf{\textit{Key words:}} #1}
 \numberwithin{equation}{section}
 \newtheorem{thm}{Theorem}[section]
 \newtheorem{lem}[thm]{Lemma}
 \newtheorem{define}[thm]{Definition}
 \newtheorem{cor}[thm]{Corollary}
 \newtheorem{prop}[thm]{Proposition}
 \newtheorem{rmk}[thm]{Remark}
 \newtheorem{ex}[thm]{Example}
\begin{document}
\title{\textbf {Homogeneous ACM bundles on exceptional Grassmannians}}

\author{Xinyi Fang
\thanks{Department of Mathematics, Nanjing University, No. 22, Hankou Road,
Nanjing, 210093, P. R. China,
xyfang@nju.edu.cn.
The research is sponsored by Excellent Postdoctoral Plan of Jiangsu Province.
},
Yusuke Nakayama
\thanks{School of Fundamental Science and Engineering, Waseda University, 3-4-1, Okubo, Shinjuku, Tokyo, 169-8555, yusuke216144@akane.waseda.jp.}
and Peng Ren
\thanks{Shanghai Center for Mathematical Sciences, Fudan University, 2005 Songhu Road, Shanghai, 200438, P. R. China, pren@fudan.edu.cn
The first author is sponsored by Innovation Action Plan (Basic research projects) of Science and Technology Commission of Shanghai Municipality (Grant No. 21JC1401900) and Excellent Postdoctoral Plan of Jiangsu Province.
}
}

\date{}
\maketitle


\begin{abstract}
In this paper, we characterize homogeneous arithmetically Cohen-Macaulay (ACM) bundles over exceptional Grassmannians in terms of their associated data. We show that there are only finitely many irreducible homogeneous ACM bundles by twisting line bundles over exceptional Grassmannians. As a consequence, we prove that some exceptional  Grassmannians are of wild representation type.
\end{abstract}

\textbf{MSC}: {Primary 14F05; Secondary 14M17}

\keywords{homogeneous ACM bundles, exceptional Grassmannians, representation type}

\section{Introduction}
Vector bundles over projective varieties have been studied for many years. For instance, Horrocks \cite{horrocks1964vector} showed that vector bundles on a projective space over a field of characteristic zero split as the direct sum of line bundles if and only if they have no intermediate cohomology. Since this result was established, research on indecomposable bundles without intermediate cohomology on projective varieties has garnered considerable attention. Such bundles are called arithmetically Cohen--Macaulay (ACM) bundles, which have have been studied extensively. ACM bundles correspond to maximal Cohen--Macaulay  modules over the associated graded ring.

ACM bundles have been studied over  particular varieties. The first nontrivial case involves two-dimensional varieties. For example, Casanellas--Hartshorne \cite{Casanellas2011} proved the existence of stable ACM bundles of arbitrary rank on smooth cubic surfaces. This was the first example of indecomposable ACM bundles of arbitrarily high rank on varieties except curves. Various other studies have also been conducted on this case (see \cite{Ballico2021}, \cite{Notari2017}, \cite{Faenzi2008}, \cite{Watanabe2008}, \cite{Yoshioka2021}). In the case of three -dimensional varieties, Casnati--Faenzi--Malaspina \cite{Casnati2015} classified all rank-two indecomposable ACM bundles on the del Pezzo threefold with Picard number three. In addition, Filip \cite{Filip2014} classified rank-two indecomposable ACM bundles on the general complete intersection of Calabi--Yau threefolds. Other studies have also been conducted on this topic (see \cite{Brambilla2011} and \cite{Ravindra2019}).

The problem of classifying ACM bundles has also been studied on homogeneous varieties. Using the Borel--Bott--Weil Theorem, Costa--Mir\'{o}-Roig \cite{costa2016homogeneous} classified irreducible homogeneous ACM bundles on Grassmannian varieties. Recently, such bundles on isotropic Grassmannians of types $B$, $C$ and $D$ were classified by Du--Fang--Ren \cite{du2022homogeneous}.

The aim of this study is to classify all irreducible homogeneous ACM bundles over homogeneous varieties $X=G/P$, where $G$ is a semi-simple linear algebraic group and $P$ is a maximal parabolic subgroup. This is a generalization of the work of Costa--Mir\'{o}-Roig and Du--Fang--Ren. We derive the necessary and sufficient conditions for an irreducible homogeneous bundle on a homogeneous variety $X$ to be an ACM bundle. The result indicates that only finitely many irreducible homogeneous ACM bundles up to twisting line bundles exist over homogeneous varieties.  In addition, we derive the conditions for the highest weight of an irreducible homogeneous vector bundle on an exceptional Grassmannian to be an ACM bundle. Moreover, we show that some exceptional Grassmannians are of wild representation type. Here, it would be appropriate to mention that there is another interesting class called Ulrich bundles. Such bundles on a homogeneous variety were studied by \cite{costa2015gl}, \cite{fonarev2016irreducible} and \cite{lee2021equivariant}.

\subsection{Statement of results}
Let $G$ be a semi-simple linear algebraic group over the complex field and $P({\alpha_{k}})$ be the maximal parabolic subgroup associated to the simple root, $\alpha_{k}$. A vector bundle, $E$, over $G/P({\alpha_{k}})$ is {\it homogeneous} if the action of $G$ over $G/P({\alpha_{k}})$ can be lifted to $E$. This can be represented by $G\times_{\upsilon}V$, where $\upsilon:P({\alpha_{k}})\to GL(V)$ is a representation of $P({\alpha_{k}})$. If this representation is irreducible, we call $E$ an {\it irreducible\ homogeneous\ bundle\/}. We use $E_{\lambda}$ to denote the irreducible homogeneous vector bundle arising from the irreducible representation of $P({\alpha_{k}})$ with highest weight, $\lambda$. We define set $\Phi_{k,G}^{+}$ as follows:
$$\Phi_{k,G}^{+}:=\{\alpha\in\Phi^{+}_{G}\ |\ (\lambda_{k},\alpha)\neq0\},$$
where $\Phi_{G}^{+}$ is the set of positive roots, $\lambda_{k}$ is the $k$-th fundamental weight and $(,)$ denotes the Killing form. Moreover, for any irreducible homogeneous vector bundle, $E_{\lambda}$, on $G/P({\alpha_{k}})$  with highest weight, $\lambda$, we define its {\it associated datum} $T^{G}_{\lambda,k}$ as follows:
$$T^{G}_{\lambda,k}:=\left\{\frac{(\lambda+\rho,\alpha)}{(\lambda_k,\alpha)} \ |\ \alpha\in\Phi_{k,G}^{+}\right\},$$
where $\rho$ is the sum of all fundamental weights.
Note that $T^{G}_{\lambda,k}$ essentially corresponds to the $step\ matrix$ in \cite{costa2016homogeneous} and \cite{du2022homogeneous}. We now state the main result of this study.
\begin{thm}\label{Thm} Let $E_\lambda$ be an initialized irreducible homogeneous vector bundle with highest weight $\lambda$ over $G/P(\alpha_k)$. Let $T_{k,\lambda}^G $ be its associated datum.
Denote $n_l:=\#\{t\in T_{k,\lambda}^G|t=l \}$. Then $E_\lambda$ is an ACM bundle if and only if $n_l\geq1$ for any integer $l\in [1,M^G_{k,\lambda}],$
where $M_{k,\lambda}^G= \max\{t|t\in T_{k,\lambda}^G\}$.
\end{thm}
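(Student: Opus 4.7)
The plan is to apply the Borel--Weil--Bott theorem to each twisted bundle $E_\lambda(t)\simeq E_{\lambda+t\lambda_k}$, $t\in\mathbb{Z}$, and read off the ACM condition from how the cohomology varies with $t$. Write $\mu_t:=\lambda+t\lambda_k$. By Borel--Weil--Bott, $H^\ast(X,E_{\mu_t})$ vanishes entirely when $\mu_t+\rho$ is singular; when $\mu_t+\rho$ is regular, the cohomology is concentrated in the single degree equal to $\#\{\alpha\in\Phi_{G}^{+}:(\mu_t+\rho,\alpha)<0\}$. Thus the ACM property $H^i(X,E_\lambda(t))=0$ for $0<i<\dim X$ and all $t\in\mathbb{Z}$ reduces to a purely combinatorial statement about these signs.

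I would first unpack the initialized hypothesis. The conditions $H^0(E_\lambda)\neq 0$ and $H^0(E_\lambda(-1))=0$ force $(\lambda,\alpha_k^\vee)=0$, so $\lambda+\rho$ is strictly dominant and $(\lambda+\rho,\alpha)>0$ for every $\alpha\in\Phi_{G}^{+}$. Splitting $\Phi_{G}^{+}=\Phi_{k,G}^{+}\sqcup(\Phi_{G}^{+}\setminus\Phi_{k,G}^{+})$, roots $\alpha\notin\Phi_{k,G}^{+}$ satisfy $(\mu_t+\rho,\alpha)=(\lambda+\rho,\alpha)>0$ independently of $t$, so they play no role. For $\alpha\in\Phi_{k,G}^{+}$, setting $t=-l$ and dividing by $(\lambda_k,\alpha)>0$ shows that $(\mu_{-l}+\rho,\alpha)$ has the same sign as $s_\alpha-l$, where $s_\alpha:=(\lambda+\rho,\alpha)/(\lambda_k,\alpha)\in T_{k,\lambda}^{G}$. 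In particular, $\mu_{-l}+\rho$ is singular iff some $s_\alpha=l$, iff $n_l\geq 1$.

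With this dictionary the argument divides by ranges of $l$. For $l\leq 0$ (nonnegative twists), $\mu_{-l}$ is dominant, so only $H^0$ is nonzero. For $l>M_{k,\lambda}^{G}$, every $s_\alpha<l$, so the cohomology sits in the top degree $\dim X=\#\Phi_{k,G}^{+}$. In the intermediate range $l\in[1,M_{k,\lambda}^{G}]$, when $n_l=0$ the weight $\mu_{-l}+\rho$ is regular and the cohomological degree equals $\#\{\alpha\in\Phi_{k,G}^{+}:s_\alpha<l\}$. Using $s_{\alpha_k}=1$ (which comes directly from $(\lambda,\alpha_k^\vee)=0$) and the existence of some $\alpha$ with $s_\alpha=M_{k,\lambda}^{G}\geq l$, one verifies that this count is strictly between $0$ and $\dim X$, producing genuine intermediate cohomology. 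The two endpoints $l=1$ and $l=M_{k,\lambda}^{G}$ already satisfy $n_l\geq 1$ automatically, so the ACM condition is equivalent to $n_l\geq 1$ for every integer $l\in[1,M_{k,\lambda}^{G}]$.

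The main technical obstacle is the third paragraph: carefully showing that once $\mu_{-l}+\rho$ is regular in the intermediate range, the resulting cohomological degree is strictly intermediate and hence obstructs the ACM property. This rests on controlling the extremes of the multiset $T_{k,\lambda}^{G}$, where the initialization condition $(\lambda,\alpha_k^\vee)=0$ is precisely what pins down the minimum $s_{\alpha_k}=1$ uniformly across all $G$; the rest is routine Borel--Weil--Bott bookkeeping.
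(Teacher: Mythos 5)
Your proposal is correct and follows essentially the same route as the paper: Borel--Bott--Weil applied to the twists $E_{\lambda - l\lambda_k}$, the reduction to roots in $\Phi_{k,G}^{+}$ via the initialization $a_k=0$, and the three regimes $l<1$, $1\le l\le M^{G}_{\lambda,k}$, $l>M^{G}_{\lambda,k}$ matching the paper's three sub-lemmas (regular of index $0$, singular, regular of index $\dim X$). Your third paragraph merely makes explicit a point the paper leaves implicit, namely that a regular weight in the intermediate range has index strictly between $0$ and $\dim X$ (using $s_{\alpha_k}=1$ and the root attaining $M^{G}_{\lambda,k}$), which is a harmless elaboration of the same argument.
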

This result was obtained by Costa and Mir\'{o}-Roig for the case in which $G/P({\alpha_{k}})$ is an Grassmannian. Moreover, when $G/P({\alpha_{k}})$ is an isotropic Grassmannian of type $B,\ C$ or $D$ , this result was obtained by Du, Fang and Ren. Therefore, the novelty of the result lies in proving the statement over other varieties of exceptional types (i.e., exceptional Grassmannians).  In this paper, we present a unified proof. From this theorem, we can obtain the following corollary.
\begin{cor}\label{Cor} Only finitely many irreducible homogeneous ACM bundles up to tensoring a line bundle exist on $G/P({\alpha_{k}})$.
\end{cor}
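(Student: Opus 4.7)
The plan is to leverage Theorem~\ref{Thm} to force the highest weight $\lambda$ of an initialized irreducible homogeneous ACM bundle into a bounded region of the dominant Weyl chamber, after which finiteness is immediate from discreteness of the weight lattice. The starting observation is that $T^G_{k,\lambda}$ is a finite multiset indexed by $\Phi^+_{k,G}$, so $|T^G_{k,\lambda}|\leq|\Phi^+_{k,G}|$. If $E_\lambda$ is ACM, Theorem~\ref{Thm} demands that every integer in $[1,M^G_{k,\lambda}]$ appear in $T^G_{k,\lambda}$; combining these two facts yields $M^G_{k,\lambda}\leq|\Phi^+_{k,G}|=:N$, a constant depending only on the pair $(G,k)$.

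Next I would translate this uniform bound into coefficient bounds on $\lambda$. Writing $\lambda=\sum_j a_j\lambda_j$ and letting $\theta$ denote the highest root of $G$, a standard root-system computation gives $(\lambda_j,\theta)>0$ for every fundamental weight $\lambda_j$ (since all Dynkin labels of $\theta$ are positive in a simple Lie algebra); in particular $\theta\in\Phi^+_{k,G}$, so $(\lambda+\rho,\theta)/(\lambda_k,\theta)$ is a genuine element of $T^G_{k,\lambda}$ and must be at most $N$. This reads
$$\sum_j a_j(\lambda_j,\theta)\leq N\,(\lambda_k,\theta)-(\rho,\theta),$$
and since $a_j\geq 0$ for $j\neq k$ (Levi-dominance) with each $(\lambda_j,\theta)>0$, every such coefficient is bounded individually. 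The remaining coefficient $a_k$ is pinned down by the initialization hypothesis, which selects a unique representative in each orbit under the $\mathrm{Pic}(G/P(\alpha_k))\cong\mathbb{Z}$-action $\lambda\mapsto\lambda+\lambda_k$.

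The step I expect to require the most care is reconciling initialization with the bound above, since the bound $M^G_{k,\lambda}\leq N$ genuinely fails before normalization (twisting by $\mathcal{O}(m)$ shifts $a_k$ and hence every entry of $T^G_{k,\lambda}$ by $m$). The clean way around this is precisely that observation: the ACM covering condition of Theorem~\ref{Thm} is translation-invariant, so each line-bundle orbit contains a well-defined initialized representative to which the bound applies. Once this normalization is in place the bound on the $a_j$ for $j\neq k$ leaves only finitely many possible $\lambda$, proving the corollary.
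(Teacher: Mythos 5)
Your proposal is correct and follows essentially the same route as the paper's own proof: pass to the initialized representative of the line-bundle orbit (so $a_k=0$ by Lemma \ref{initial}, legitimate because ACM-ness itself is twist-invariant), use the pigeonhole bound $M^{G}_{\lambda,k}\le |\Phi^{+}_{k,G}|=\dim G/P(\alpha_k)$ forced by Theorem \ref{Thm}, and then bound each remaining coefficient $a_i$ because it enters the entries of $T^{G}_{\lambda,k}$ linearly with positive coefficient. Your evaluation at the highest root $\theta$ is simply an explicit way of justifying the paper's assertion that $M^{G}_{\lambda,k}$ grows in every $a_i$ $(i\neq k)$ with positive coefficient (and the normalization should be justified by twist-invariance of the ACM property rather than literal translation-invariance of the covering condition, a harmless slip that does not affect the argument).
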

In addition, based on the aforementioned theorem, we use these irreducible homogeneous vector bundles as a building block to construct families of indecomposable ACM bundles on some exceptional Grassmannians of arbitrary high rank and dimension (see Section $4$).

$\mathbf{Acknowledgment:}$ We are grateful to Takeshi Ikeda for their helpful advice and comments, to Hajime Kaji for beneficial comments, and to Satoshi Naito and Takafumi Kouno for useful advice. We would like to thank Editage (www.editage.com) for English language editing.
 \paragraph{Notation and convention}

\begin{itemize}
\item $E_n$: the simple Lie group with Dynkin diagram $E_n~(n=6,7,8)$;
\item  $F_4$: the simple Lie group with Dynkin diagram $F_4$;
\item  $G_2$: the simple Lie group with Dynkin diagram $G_2$;
\item  $\alpha_i$: the $i$-th simple roots;

\item  $(a_1,\dots,a_n)$:
$a_1\alpha_1+\dots+a_n\alpha_n$;

\item  $\Phi^+_G$: the set of positive roots of $G$;

\item  $\lambda_k$: the $k$-th fundamental weight;

\item  $\rho$: $\lambda_1+\dots+\lambda_n$
\item  $(\cdot,\cdot)$: the Killing form;
\item $E_\lambda$: the irreducible homogeneous vector bundle with highest weight $\lambda$;
\item $G/P(\alpha_k)$: the homogeneous variety with semisimple complex Lie group $G$ and parabolic subgroup $P(\alpha_k)$;
\item $T_{\lambda,k}^{G}$: the associated datum of $E_\lambda$ on $G/P(\alpha_k)$.
\item $E^\vee$ : the dual of $E$;
\item $h^i(X,E)$: the dimension of $i$-th sheaf cohomology of $E$.
\end{itemize}
\section{Preliminaries}
We begin this section by reviewing some definitions and introducing some notations. All algebraic varieties in this study are defined over the field of complex numbers, $\mathbb{C}$.

\subsection{Exceptional Grassmannians}
Let $G$ be a semisimple complex Lie group and $H$ be a fixed maximal torus of $G$. Denote their Lie algebras by
$\mathfrak{g}$ and $\mathfrak{h}$ respectively. Let $\Phi$ be its root system and $\Delta=\{\alpha_1,...,\alpha_n\}\subset\Phi$ be a set of fixed simple roots.

 Let $I\subset\Delta$ be a subset of simple roots. Define \[\Phi^-(I):=\{\alpha\in\Phi^-|\alpha=\sum\limits_{\alpha_{i}\notin I}p_i\alpha_i\}.\]
Let \[\mathcal{P}(I):=\mathfrak{h}\bigoplus (\oplus_{\alpha\in\Phi^+}\mathfrak{g}_\alpha)\bigoplus (\oplus_{\alpha\in\Phi^-(I)}\mathfrak{g}_\alpha)\]
and $P(I)$ be the subgroup of $G$ such that the Lie algebra of $P(I)$ is $\mathcal{P}(I).$
Recall that a closed subgroup $P$ of $G$ is called \emph{parabolic} if the quotient space $G/P$ is projective.
We have the following theorem to describe all parabolic subgroups of $G$.

\begin{thm}(see \cite{ottaviani1995rational} Theorem 7.8)
Let $G$ be a semisimple simply connected Lie group and $P$ be a parabolic subgroup of G. Then there exists $g\in G$ and $I\subset \Delta$ such that \[g^{-1}Pg=P(I).\]
\end{thm}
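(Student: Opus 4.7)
The plan is to reduce the problem to the classification of parabolic subgroups containing a fixed Borel subgroup, and then identify those standard parabolics with subsets of $\Delta$ via their Lie algebras. Fix a Borel subgroup $B \supseteq H$ with Lie algebra $\mathfrak{b} = \mathfrak{h} \oplus \bigoplus_{\alpha \in \Phi^+} \mathfrak{g}_\alpha$, so that $B = P(\Delta)$. Since $G/P$ is projective by the definition of a parabolic subgroup and $B$ is connected and solvable, the Borel fixed point theorem provides a coset $gP \in G/P$ fixed by $B$. This is equivalent to $g^{-1} B g \subseteq P$, so after replacing $P$ by the conjugate $g^{-1} P g$ we may assume $B \subseteq P$ and prove the statement with $g = e$.

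Next, I would analyze the Lie algebra $\mathfrak{p}$ of such a standard parabolic. Since $\mathfrak{p} \supseteq \mathfrak{b}$, the root-space decomposition forces
\[\mathfrak{p} = \mathfrak{b} \oplus \bigoplus_{\alpha \in \Psi} \mathfrak{g}_{-\alpha}\]
for a unique subset $\Psi \subseteq \Phi^+$. The subalgebra condition implies that $\Psi$ is closed in the sense that whenever $\alpha, \beta \in \Psi$ satisfy $\alpha + \beta \in \Phi^+$, the bracket $[\mathfrak{g}_{-\alpha}, \mathfrak{g}_{-\beta}] \subseteq \mathfrak{g}_{-(\alpha+\beta)}$ forces $\alpha + \beta \in \Psi$.

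I would then use standard root-system combinatorics to show that such a $\Psi$ coincides with the positive part of the sub-root system generated by $J := \Psi \cap \Delta$. The inclusion $\supseteq$ follows from closedness by induction on the height of a root; the inclusion $\subseteq$ is the delicate step: for any $\alpha \in \Psi \setminus J$, one selects a simple root $\alpha_i$ with $(\alpha, \alpha_i) > 0$ and uses the $\mathfrak{sl}_2$-triple at $\alpha_i$ to decompose $\alpha = (\alpha - \alpha_i) + \alpha_i$ with both summands in $\Phi^+$ and in $\Psi$, iterating until $\alpha$ is expressed as a sum of elements of $J$. Setting $I := \Delta \setminus J$ then identifies $-\Psi$ with $\Phi^-(I)$, giving $\mathfrak{p} = \mathcal{P}(I)$. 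Since a parabolic subgroup of a connected algebraic group is connected and uniquely determined by its Lie algebra, we conclude $P = P(I)$.

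The main obstacle is the combinatorial step identifying $\Psi$ with the positive half of a Levi subsystem. The induction on height is routine, but the descent in the other direction requires care: one must guarantee at each stage the existence of a simple root $\alpha_i$ with $\alpha - \alpha_i \in \Phi^+$ and $\alpha_i \in \Psi$, which follows from the positivity of some pairing $(\alpha, \alpha_i)$ combined with $\mathfrak{sl}_2$ representation theory on the corresponding $\alpha$-string. Everything else rests on standard ingredients: the conjugacy of Borel subgroups, the Borel fixed point theorem, and the correspondence between closed connected subgroups of $G$ containing $H$ and their Lie subalgebras.
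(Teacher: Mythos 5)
This statement is quoted from the cited reference (Ottaviani, Theorem 7.8) and the paper supplies no proof of its own; your argument --- Borel fixed point theorem to reduce to a parabolic containing $B$, then identifying the subalgebras of $\mathfrak{g}$ containing $\mathfrak{b}$ with the sets $\Phi^+\cup\Phi^-(I)$ via closedness and the root-string descent, and finally using connectedness of parabolics --- is precisely the standard proof given there and in the textbooks, and it is correct. Two harmless points to tidy: from $B\cdot gP=gP$ you get $g^{-1}Bg\subseteq P$, i.e.\ $B\subseteq gPg^{-1}$, so the conjugate you pass to is $gPg^{-1}$ (just rename $g$); and in the descent step the memberships $\alpha-\alpha_i\in\Psi$ and $\alpha_i\in\Psi$ come from bracketing $\mathfrak{g}_{-\alpha}\subseteq\mathfrak{p}$ against $\mathfrak{g}_{\alpha_i}\subseteq\mathfrak{b}\subseteq\mathfrak{p}$ (the $\mathfrak{sl}_2$-string argument you allude to), not from closedness of $\Psi$ alone.
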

From this classification theorem, we always use $P(I)$ to denote the parabolic subgroup of $G$. It's not hard to see that $P(I)$ is a maximal parabolic subgroup of $G$ when $\#|I|=1$. In this paper, we denote $P({\alpha_{k}})$ by the maximal parabolic subgroup associated to the simple root, $\alpha_{k}$.
It's known that when $G$ is a simple Lie group of type $A$, $G/P({\alpha_{k}})$ is the usual Grassmannian. We call $G/P({\alpha_{k}})$ an \emph{isotropic Grassmannian} if $G$ is of type $B$, $C$ or $D$. We call $G/P({\alpha_{k}})$ an \emph{exceptional Grassmannian} if $G$ is of type $E_n~(n=6,7,8)$, $F_4$ or $G_2$.

  In this paper, we focus on the exceptional  Grassmannians $G/P({\alpha_{k}})$  where the Dynkin diagrams of $G$ are as follows.

\setlength{\unitlength}{0.4mm}

\begin{center}
\begin{picture}(280,20)(0,120)
\put(10,100){\circle{4}} \put(12,100){\line(1,0){16}}
\put(30,100){\circle{4}} \put(32,100){\line(1,0){16}}
\put(50,100){\circle{4}} \put(52,100){\line(1,0){16}}
\put(70,100){\circle{4}} \put(72,100){\line(1,0){16}}
\put(90,100){\circle{4}} \put(50,102){\line(0,1){11}}
\put(50,115){\circle{4}} \put(-10,100){\makebox(0,0)[cc]{$E_6:$}}
\put(7,90){$_1$}
\put(27,90){$_3$}
\put(47,90){$_4$}
\put(67,90){$_5$}
\put(88,90){$_6$}
\put(55,115){$_2$}

\put(210,100){\circle{4}} \put(212,100){\line(1,0){16}}
\put(230,100){\circle{4}} \put(231,102){\line(1,0){18}} \put(231,98){\line(1,0){18}}
\put(250,100){\circle{4}} \put(252,100){\line(1,0){16}}
\put(270,100){\circle{4}}
 \put(190,100){\makebox(0,0)[cc]{$F_4:$}}
 \put(235,103){\line(3,-1){9}}
 \put(235,97){\line(3,1){9}}
\put(207,90){$_1$}
\put(227,90){$_2$}
\put(247,90){$_3$}
\put(267,90){$_4$}
 \end{picture}
\end{center}
\vspace{.3cm}

\begin{center}
\begin{picture}(280,20)(0,120)
\put(10,100){\circle{4}} \put(12,100){\line(1,0){16}}
\put(30,100){\circle{4}} \put(32,100){\line(1,0){16}}
\put(50,100){\circle{4}} \put(52,100){\line(1,0){16}}
\put(70,100){\circle{4}} \put(72,100){\line(1,0){16}}
\put(90,100){\circle{4}} \put(92,100){\line(1,0){16}}
\put(110,100){\circle{4}} \put(50,102){\line(0,1){11}}
\put(50,115){\circle{4}} \put(-10,100){\makebox(0,0)[cc]{$E_7:$}}
\put(7,90){$_1$}
\put(27,90){$_3$}
\put(47,90){$_4$}
\put(67,90){$_5$}
\put(87,90){$_6$}
\put(107,90){$_7$}
\put(54,115){$_2$}

\put(210,100){\circle{4}} \put(230,100){\circle{4}}
\put(211,102){\line(1,0){18}}
\put(212,100){\line(1,0){16}}
\put(211,98){\line(1,0){18}}
\put(190,100){\makebox(0,0)[cc]{$G_2:$}}
\put(207,110){$_1$}
\put(227,110){$_2$}
 \put(215,100){\line(3,-1){9}}
 \put(215,100){\line(3,1){9}}
 \end{picture}
\end{center}
\vspace{.3cm}

\begin{center}
\begin{picture}(280,20)(0,120)
\put(10,100){\circle{4}} \put(12,100){\line(1,0){16}}
\put(30,100){\circle{4}} \put(32,100){\line(1,0){16}}
\put(50,100){\circle{4}} \put(52,100){\line(1,0){16}}
\put(70,100){\circle{4}} \put(72,100){\line(1,0){16}}
\put(90,100){\circle{4}} \put(92,100){\line(1,0){16}}
\put(110,100){\circle{4}} \put(112,100){\line(1,0){16}}
\put(130,100){\circle{4}} \put(50,102){\line(0,1){11}}
\put(50,115){\circle{4}} \put(-10,100){\makebox(0,0)[cc]{$E_8:$}}
\put(7,90){$_1$}
\put(27,90){$_3$}
\put(47,90){$_4$}
\put(67,90){$_5$}
\put(87,90){$_6$}
\put(107,90){$_7$}
\put(127,90){$_8$}
\put(54,115){$_2$}
 \end{picture}
\end{center}
\vspace{2cm}

\subsection{Homogeneous vector bundles}
Now we want to introduce an important class of vector bundles on $G/P$.

\begin{define}
Over $G/P$, a vector bundle $E$ is called \emph{homogeneous} if there exists an action $G$ over $E$ such that the following diagram commutes

\centerline{
    \xymatrix{   G\times E \ar[r]\ar[d]& E\ar[d]\\
    G\times G/P \ar[r] & G/P. }}
\end{define}

\begin{rmk}
\begin{enumerate}
\item There is a one-to-one correspondence between homogeneous vector bundles on $G/P$ with holomorphic representations of $P$ (see \cite{ottaviani1995rational} Theorem 9.7).
\begin{equation}
	\begin{aligned}
		\{\text{homogeneous bundles on}~ G/P\} &\stackrel{1:1}{\longleftrightarrow}\{\text{representations of}~ P \}\\
		E&\longmapsto \upsilon:P\rightarrow GL(\pi^{-1}(P))\\
		E_{\upsilon} &\longmapsfrom \upsilon:P\rightarrow GL(V)
	\end{aligned}	
\end{equation}
\[
E_{\upsilon}:=G\times_{\upsilon}V=\{[g,v]|[g,v]\sim[gp,\upsilon(p^{-1})v] ~for ~p\in P\}.
\]
\item If a representation $\upsilon:P\to GL(V)$ is irreducible, then we call the corresponding bundle $E_{\upsilon}$ an \emph{irreducible homogeneous vector bundle}.
\end{enumerate}
\end{rmk}

Let us first recall that the \emph{weight lattice} $\Lambda$ of $G$ is the set of all linear functions $\lambda: \mathfrak{h}\to \mathbb{C}$ for which $\frac{2(\lambda,\alpha)}{(\alpha,\alpha)}\in\mathbb{Z}$ for any $\alpha\in\Phi$, where $(,)$ denotes the Killing form. An element in $\Lambda$ is called a weight. A weight $\lambda\in\Lambda$ is said to be \emph{dominant} if $\frac{2(\lambda,\alpha)}{(\alpha,\alpha)}$ are non-negative for $\alpha\in\Delta$ and \emph{strongly dominant} if these integers are positive. Let $\lambda_{1},\ldots,\lambda_{n}\in \Lambda$ be the {\it fundamental weights\/}, i.e., $\frac{2(\lambda_i,\alpha_j)}{(\alpha_j,\alpha_j)}=\delta_{ij}$. From this definition, $\lambda=\sum_{i=1}^{n}a_{i}\lambda_{i}$ is a dominant weight if and only if $a_{i}\geq 0$ and a strongly dominant weight if and only if $a_{i}>0$ for $1\leq i \leq n$.

Let $V$ be a representation of $\mathfrak{g}.$ The weight lattice of $\Lambda(V)=\{\lambda\in\Lambda| h.v=\lambda(h)v \text{ for all }h\in\mathfrak{h}\}$. A weight $\lambda\in\Lambda(V)$ is called the \emph{highest weight} of $V$ if $\lambda+\alpha$ is not a weight in $\Lambda(V)$ for any $\alpha \in \Phi^+$.

Generally, homogeneous vector bundles over $G/P$ can be classified by the filtration of the irreducible homogeneous vector bundles. Hence we only consider the irreducible homogeneous vector bundles. We now introduce the classification of the irreducible representations of parabolic subgroups.

 \begin{prop}(See \cite[Proposition 10.9]{ottaviani1995rational})
Let $I=\{\alpha_1,\dots,\alpha_k\}$ be a subset of simple roots. Let $\lambda_1,\dots,\lambda_k$ be the corresponding fundamental weights. Then all the irreducible representations of $P(I)$ are $$V\otimes L^{n_1}_{\lambda_1}\otimes\dots\otimes L^{n_k}_{\lambda_k},$$
where $V$ is a representation of $S_P$ (the semisimple part of $P(I)$), $n_i\in\mathbb{Z}$ and $L_{\lambda_i}$ is a one-dimensional representation with weight $\lambda_i$.
\end{prop}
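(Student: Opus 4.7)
The plan is to reduce the classification to representations of a reductive group by killing the unipotent radical, and then apply the standard decomposition for reductive complex groups. First I would record the Levi decomposition $P(I)=L(I)\ltimes R_u(P(I))$, where the Lie algebra of $R_u(P(I))$ is $\bigoplus_{\alpha\in\Phi^{+}\setminus(-\Phi^{-}(I))}\mathfrak{g}_{\alpha}$ and the Lie algebra of the Levi factor $L(I)$ is $\mathfrak{h}\oplus\bigoplus_{\alpha\in\Phi^{-}(I)\cup(-\Phi^{-}(I))}\mathfrak{g}_{\alpha}$. Applying the Lie--Kolchin theorem to the connected unipotent group $R_u(P(I))$ acting on any finite-dimensional irreducible $P(I)$-representation $W$ produces a nonzero fixed subspace $W^{R_u(P(I))}$, which is $P(I)$-stable by normality of $R_u(P(I))$ and hence equals $W$ by irreducibility. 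Thus irreducible representations of $P(I)$ correspond bijectively to irreducible representations of the reductive quotient $L(I)$.

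Next I would analyze $L(I)$. Its derived subgroup is the semisimple part $S_P$ whose root system is $\Phi^{-}(I)\cup(-\Phi^{-}(I))$, generated by the simple roots outside $I$, and its connected center $Z^{\circ}$ is a torus of dimension $k=\#I$; the multiplication map $S_P\times Z^{\circ}\to L(I)$ is surjective with finite kernel. By the standard structure theorem for irreducible representations of a connected reductive complex group, every such representation has the form $V\otimes\chi$, where $V$ is an irreducible $S_P$-module and $\chi$ is a character of $Z^{\circ}$ whose restriction to $S_P\cap Z^{\circ}$ matches the central character of $V$.

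The core step is to identify $X^{*}(Z^{\circ})$ with the sublattice $\bigoplus_{\alpha_i\in I}\mathbb{Z}\lambda_i$ of the weight lattice of $G$. Since $\frac{2(\lambda_i,\alpha_j)}{(\alpha_j,\alpha_j)}=\delta_{ij}$, for $\alpha_i\in I$ the weight $\lambda_i$ annihilates every coroot $\alpha_j^{\vee}$ with $\alpha_j\notin I$, and these coroots span the Cartan subalgebra of $S_P$; hence $\lambda_i$ descends to a well-defined character of $Z^{\circ}$, yielding the one-dimensional representation $L_{\lambda_i}$. A dimension count together with the linear independence of the $\lambda_i$ shows that $\{\lambda_i\mid\alpha_i\in I\}$ freely generates $X^{*}(Z^{\circ})$. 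Combining the three steps gives the desired presentation $V\otimes L_{\lambda_1}^{n_1}\otimes\cdots\otimes L_{\lambda_k}^{n_k}$.

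The main obstacle I anticipate is handling the finite intersection $S_P\cap Z^{\circ}$ when gluing $V$ and $\chi$, since not every pair $(V,\chi)$ extends to $L(I)$; one must match the central character of $V$ with $\chi$ on that finite group. The cleanest workaround is to pull the problem back to the simply connected cover $\widetilde{S_P}\times Z^{\circ}$, perform the classification there, and then descend; this is also consistent with the highest-weight language used throughout the remainder of the paper.
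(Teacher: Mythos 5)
The paper itself gives no argument for this proposition; it is quoted from Ottaviani's notes, and the standard proof there follows exactly your first two reductions (the unipotent radical acts trivially on any irreducible holomorphic representation, so one is reduced to the reductive Levi factor $L(I)=S_P\cdot Z^{\circ}$, whose irreducibles are built from irreducibles of $S_P$ and characters). Those two steps of yours are correct.

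However, your core step contains a genuine error: the claim that a dimension count plus linear independence shows $\{\lambda_i\mid\alpha_i\in I\}$ freely generates $X^{*}(Z^{\circ})$. Independence and dimension only give a finite-index sublattice, and equality really does fail. Take $G=SL_3$, $I=\{\alpha_1\}$, so $L\cong GL_2$ and $Z^{\circ}=\{\operatorname{diag}(t^{-2},t,t)\}$; then $\lambda_1$ restricts to $t\mapsto t^{-2}$, an index-two sublattice of $X^{*}(Z^{\circ})\cong\mathbb{Z}$. Consequently "combining the three steps" does not literally produce the statement: not every character of $Z^{\circ}$ is a $\mathbb{Z}$-combination of the $\lambda_i$, and conversely not every pair $(V,\chi)$ descends from $S_P\times Z^{\circ}$ to $L$, which is exactly the gluing problem you defer at the end. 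The clean repair is to work with characters of $L$ (equivalently of $P(I)$, since $R_u(P)$ has no characters) instead of characters of $Z^{\circ}$: for $G$ simply connected (the standing assumption in the paper's Theorem 7.8) one has $X^{*}(P(I))=\{\lambda\in X^{*}(H)\mid\langle\lambda,\alpha_j^{\vee}\rangle=0\ \text{for all }\alpha_j\notin I\}=\bigoplus_{\alpha_i\in I}\mathbb{Z}\lambda_i$, because $\lambda_i$ is trivial on $H\cap S_P$ and Weyl$(S_P)$-invariant, hence extends to $L$; every irreducible $P$-representation restricts irreducibly to $S_P$; every irreducible $S_P$-representation admits an extension to $P$ (fix the one given by the embedding of weight lattices used in the paper); and two irreducible $P$-representations with isomorphic $S_P$-restrictions differ by a character of $P$, by Schur's lemma applied to the $Z^{\circ}$-action on $\operatorname{Hom}_{S_P}$. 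This bypasses both the faulty lattice identification and the $S_P\cap Z^{\circ}$ compatibility issue, whereas passing to the cover $\widetilde{S_P}\times Z^{\circ}$ as you propose leaves the essential descent step unaddressed. Note also that simple connectedness of $G$ is needed even to regard the $\lambda_i$ as characters of $H$; in its absence the statement must be read on the level of $\mathfrak{p}$- or covering-group representations, as the paper implicitly does.
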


Notice that the weight lattice of $S_P$ can be embedded in the weight lattice of $G$. If $\lambda$ is the highest weight of an irreducible representation $V$ of $S_P$, then $\lambda+\sum\limits_{u=1}^kn_u\lambda_u$ is the highest weight of $V\otimes L^{n_1}_{\lambda_1}\otimes\dots\otimes L^{n_k}_{\lambda_k}$.

\begin{rmk}\label{dominant}
\emph{\begin{enumerate}
\item[1. ]In this paper, we denote $E_\lambda$ by the homogeneous bundle arising from the irreducible representation of $P$ with highest weight $\lambda.$
\item [2.] If  $E_\lambda$ is an irreducible homogeneous vector bundle over $G/P(I)$ with $\lambda=\sum a_i\lambda_i$, then $a_i\geq0$ for $\alpha_i\notin I$. Because $E_\lambda$ is determined by the representation of the semisimple part whose
 highest weight is dominant.
\end{enumerate}}
\end{rmk}


\subsection{ACM bundles}
We introduce ACM bundles on a projective variety as this is the main focus of this study.
\begin{define} Let $\iota:X \subset \mathbb{P}^{N}$ be a projective variety with $\mathcal{O}_{X}(1):=\iota^{*}\mathcal{O}_{\mathbb{P}^{N}}(1)$. A vector bundle, $E$, over $X$ is called {\it arithmetically\ Cohen\ Macauley} (ACM)  if
	$$H^{i}(X,E(t))=0,\ {\rm where}\ E(t):=E\otimes_{\mathcal{O}_{X}}\mathcal{O}_{X}(t),\ {\rm for\ all}\ i=1,\ldots,{\rm dim}\ X-1\ {\rm and\ all}\ t \in \mathbb{Z}.$$
\end{define}
By definition, $E$ being an ACM bundle over a projective variety, $X$, is equivalent to $E(t)$ being an ACM bundle over $X$ for $t \in \mathbb{Z}$. So for simplicity, we introduce the following definition.
\begin{define}
	Given a projective variety $(X,\mathcal{O}_X(1))$, a vector bundle $E$ on X is called \emph{initialized} if $$H^0(X,E(-1))=0$$ and $$H^0(X,E)\neq0.$$
\end{define}

For an initialized homogeneous vector bundle on a homogeneous variety, the following result is known (see \cite{du2022homogeneous}).
\begin{lem}\label{initial}
	Let $E_\lambda$ be an initialized homogeneous vector bundle on $G/P(\alpha_k)$ with $\lambda=a_1\lambda_1+\dots+a_n\lambda_n$. Then $a_{k}=0.$
\end{lem}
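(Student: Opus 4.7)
The plan is to use the parabolic version of the Borel--Bott--Weil theorem together with Remark \ref{dominant} to pin down the coefficient $a_{k}$.

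First I would identify the polarisation in terms of weights. Since $P(\alpha_{k})$ is maximal, the Picard group of $G/P(\alpha_{k})$ has rank one, and the ample generator $\mathcal{O}_{G/P(\alpha_{k})}(1)$ is the line bundle $E_{\lambda_{k}}$ associated with the fundamental weight $\lambda_{k}$. Hence
$$E_{\lambda}(-1) \;=\; E_{\lambda}\otimes E_{-\lambda_{k}} \;=\; E_{\lambda-\lambda_{k}},$$
and both $E_{\lambda}$ and $E_{\lambda}(-1)$ are again irreducible homogeneous vector bundles (the condition that $a_{i}\geq 0$ for $i\neq k$ coming from Remark \ref{dominant} is unaffected by a shift in $\lambda_{k}$).

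Next I would invoke the Borel--Bott--Weil theorem for $G/P(\alpha_{k})$: for an irreducible homogeneous bundle $E_{\mu}$, the cohomology $H^{0}(G/P(\alpha_{k}),E_{\mu})$ is nonzero if and only if $\mu$ is a dominant weight of $G$, in which case it is isomorphic to the irreducible $G$-module of highest weight $\mu$ (up to duality convention). From $H^{0}(G/P(\alpha_{k}),E_{\lambda})\neq 0$ and the fact that $a_{i}\geq 0$ for $i\neq k$, this immediately gives $a_{k}\geq 0$. Applying the same criterion to $E_{\lambda-\lambda_{k}}$, the vanishing $H^{0}(G/P(\alpha_{k}),E_{\lambda-\lambda_{k}})=0$ forces $\lambda-\lambda_{k}$ to fail to be dominant; since $a_{i}\geq 0$ for $i\neq k$, the only possible obstruction is at the $k$-th coordinate, which yields $a_{k}-1<0$, i.e.\ $a_{k}\leq 0$. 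Combining the two inequalities gives $a_{k}=0$.

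The only step that requires care is the first one, namely the identification of $\mathcal{O}_{G/P(\alpha_{k})}(1)$ with $E_{\lambda_{k}}$; once this is fixed, the argument is a direct application of Borel--Bott--Weil in the parabolic setting. No non-trivial calculation beyond inspecting the $k$-th coordinate of the highest weight is needed.
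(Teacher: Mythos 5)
Your proof is correct and is essentially the argument the paper relies on: the paper states Lemma \ref{initial} without proof, citing \cite{du2022homogeneous}, and the content there is precisely your Borel--Bott--Weil computation ($H^0(G/P(\alpha_k),E_\mu)\neq 0$ if and only if $\mu$ is $G$-dominant, applied to $\lambda$ and to $\lambda-\lambda_k$, together with $a_i\geq 0$ for $i\neq k$ from Remark \ref{dominant}). The one point you flag as delicate, the identification $\mathcal{O}_{G/P(\alpha_k)}(1)=E_{\lambda_k}$, i.e. $E_\lambda(t)=E_{\lambda+t\lambda_k}$, is exactly the convention the paper uses implicitly (its proof of Theorem \ref{Thm} works with the weights $\lambda+\rho-t\lambda_k$ for $t$ ranging over all of $\mathbb{Z}$, and the sub-lemma ``regular of index $0$ iff $t<1$'' is consistent only with this sign), so your argument matches the paper's setup.
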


\section{Characterization of homogeneous ACM bundles on $G/P(\alpha_k)$}
\subsection{Proof of Theorem \ref{Thm}}
\ In this section, we prove the main result of this work by using the Borel--Bott--Weil Theorem, which is a powerful tool to compute the sheaf cohomology groups of irreducible homogeneous bundles. First, some definitions are presented.
\begin{define}\ Let $\lambda$ be a weight.\\
	(i)\ $\lambda$ is called {\it singular\/} if there exists $\alpha \in \Phi^{+}$ such that $(\lambda,\alpha)=0$.\\
	(ii)\ $\lambda$ is called {\it regular\ of\ index\/} $p$ if it is not singular and if there are exactly $p$ roots $\alpha_{1},\ldots,\alpha_{p} \in \Phi^{+}$ such that $(\lambda,\alpha_{i})<0$.
\end{define}

Now we can introduce the Borel--Bott--Weil Theorem.

\begin{thm}[Borel--Bott--Weil, see \cite{ottaviani1995rational} Theorem 11.4]\label{borel bott weil} Let $E_\lambda$ be an irreducible homogeneous vector bundle over $G/P.$
	\begin{enumerate}
		\item[1)] If $\lambda+\rho$ is singular, then $$H^i(G/P,E_\lambda)=0, \forall i\in\mathbb{Z}.$$
		\item[2)] If $\lambda+\rho$ is regular of index p, then $$H^i(G/P,E_\lambda)=0, \forall i\neq p,$$
		and $$H^p(G/P,E_\lambda)=G_{w(\lambda+\rho)-\rho},$$ where $\rho=\sum\limits_{i=1}^n\lambda_i$, $w(\lambda+\rho)$ is the unique element of the fundamental Weyl chamber of G which is congruent to $\lambda+\rho$ under the action of the Weyl group and $G_{w(\lambda+\rho)-\rho}$ is the irreducible representation of $G$ with highest weight $w(\lambda+\rho)-\rho$.
	\end{enumerate}
\end{thm}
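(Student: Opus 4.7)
The plan is to prove Borel--Bott--Weil via Demazure's algebraic argument, which proceeds in three stages: reduction from $G/P$ to the full flag variety $G/B$, induction on the length of a certain Weyl group element via $\mathbb{P}^1$-fibrations, and the Borel--Weil theorem as the base case. First I would reduce from $G/P$ to $G/B$. For any parabolic $P\supseteq B$, the natural projection $\pi:G/B\to G/P$ is a locally trivial fibration whose fibers are isomorphic to the flag variety of the Levi quotient of $P$. For an irreducible $P$-representation of highest weight $\lambda$, regarded as a $B$-representation by restriction to give the line bundle $L_\lambda$ on $G/B$, one checks fiberwise that $\pi_*L_\lambda\cong E_\lambda$ and $R^i\pi_*L_\lambda=0$ for $i>0$; this is Borel--Weil applied to the Levi flag variety, which is a smaller case handled by induction on the rank. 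Consequently $H^i(G/P,E_\lambda)\cong H^i(G/B,L_\lambda)$, and it suffices to prove the theorem for line bundles on $G/B$.

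For $G/B$, the main geometric input is the $\mathbb{P}^1$-bundle $p_\alpha:G/B\to G/P_\alpha$ attached to each simple root $\alpha$, where $P_\alpha$ is the minimal parabolic. On each fiber $P_\alpha/B\cong\mathbb{P}^1$, the bundle $L_\lambda$ restricts to $\mathcal{O}_{\mathbb{P}^1}(\langle\lambda,\alpha^\vee\rangle)$. I would dispose of the singular case first: if $(\lambda+\rho,\alpha)=0$ for some positive root $\alpha$, Weyl-group symmetry reduces to the case $\alpha$ simple (with appropriate degree shifts in cohomology), where $\langle\lambda,\alpha^\vee\rangle=-1$. Then both $R^0(p_\alpha)_*L_\lambda$ and $R^1(p_\alpha)_*L_\lambda$ vanish because $H^*(\mathbb{P}^1,\mathcal{O}(-1))=0$, and the Leray spectral sequence forces $H^i(G/B,L_\lambda)=0$ for all $i$.

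If $\lambda+\rho$ is regular, I would induct on the index $p$. When $p\geq 1$, some simple root $\alpha$ satisfies $(\lambda+\rho,\alpha)<0$, hence $\langle\lambda,\alpha^\vee\rangle\leq -2$. Relative Serre duality along the fibers of $p_\alpha$, combined with Leray, gives an isomorphism $H^i(G/B,L_\lambda)\cong H^{i-1}(G/B,L_{s_\alpha\cdot\lambda})$, where $s_\alpha\cdot\lambda:=s_\alpha(\lambda+\rho)-\rho$. Since $s_\alpha\cdot\lambda+\rho=s_\alpha(\lambda+\rho)$ lies in the same Weyl orbit as $\lambda+\rho$ but has index $p-1$, the inductive hypothesis applies. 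Iterating along a reduced expression for the Weyl element $w$ that sends $\lambda+\rho$ into the dominant chamber reduces to the base case $p=0$ and accounts for the exact shift $w(\lambda+\rho)-\rho$ in the final answer.

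The hardest part will be the base case $p=0$, namely the Borel--Weil theorem: for $\lambda$ dominant, $H^0(G/B,L_\lambda)$ is the irreducible $G$-representation $G_\lambda$ of highest weight $\lambda$, and $H^i(G/B,L_\lambda)=0$ for $i>0$. The identification of global sections comes from Frobenius reciprocity for algebraic induction from $B$ to $G$, together with the fact that the $B$-eigenline of lowest weight inside $G_\lambda^*$ carries precisely the character $-\lambda$. The vanishing of higher cohomology can be obtained either from Kodaira vanishing, after verifying that $L_\lambda\otimes\omega_{G/B}^{-1}$ is nef and big for dominant $\lambda$, or more intrinsically through Kostant's computation of the Lie algebra cohomology $H^*(\mathfrak{n},V_\lambda)$ for the nilradical $\mathfrak{n}$ of $\mathrm{Lie}(B)$. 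Once the base case is in hand, the inductive machinery above delivers both the vanishing statement in (1) and the identification $H^p(G/P,E_\lambda)=G_{w(\lambda+\rho)-\rho}$ in (2).
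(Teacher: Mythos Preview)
The paper does not prove this theorem at all: it is quoted as a black box from Ottaviani's lecture notes (Theorem~11.4 there), so there is no in-paper argument to compare against. Your outline is the standard Demazure proof, and the overall architecture---reduction to $G/B$ via $R\pi_*$, the $\mathbb{P}^1$-fibrations $G/B\to G/P_\alpha$, the reflection isomorphism $H^i(L_\lambda)\cong H^{i-1}(L_{s_\alpha\cdot\lambda})$ when $\langle\lambda,\alpha^\vee\rangle\le -2$, and Borel--Weil as the base case---is correct and is exactly what one finds in Demazure's paper or in Jantzen's book.

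One organizational wrinkle: your treatment of the singular case invokes ``Weyl-group symmetry'' to reduce to a \emph{simple} root with $(\lambda+\rho,\alpha)=0$, but that reduction is precisely the reflection isomorphism you only establish afterwards in the regular-index induction. The clean way to avoid circularity is to run a single induction (say on the number of positive roots $\alpha$ with $(\lambda+\rho,\alpha)<0$, or on $\ell(w)$ for the shortest $w$ with $w(\lambda+\rho)$ dominant or on a wall) that handles the singular and regular cases together: at each stage either some simple $\alpha$ has $\langle\lambda,\alpha^\vee\rangle=-1$ (and everything vanishes via $\mathcal{O}_{\mathbb{P}^1}(-1)$), or some simple $\alpha$ has $\langle\lambda,\alpha^\vee\rangle\le -2$ (and you reflect and decrease the inductive quantity). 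With that reordering your sketch is complete; for the base case, Kodaira vanishing is adequate in characteristic zero, which is the setting of the present paper.
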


\ The following lemma is crucial to the proof of the main result.
\begin{lem}\label{Lem2} Let $E_{\lambda}$ be an irreducible homogeneous vector bundle on a homogeneous variety $G/P({\alpha_{k}})$ with highest weight $\lambda$. Then, $E_{\lambda}$ is an ACM bundle if and only if one of the following conditions holds for each $t \in \mathbb{Z}$: \\
	\ \ \ $1)\ \lambda+\rho-t\lambda_{k}$ is regular  of index $0$;\\
	\ \ \ $2)\ \lambda+\rho-t\lambda_{k}$ is regular of index dim $G/P({\alpha_{k}})$; and\\
	\ \ \ $3)\ \lambda+\rho-t\lambda_{k}$ is singular.
\end{lem}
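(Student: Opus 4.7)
Since $P(\alpha_k)$ is a maximal parabolic subgroup, $\mathrm{Pic}(G/P(\alpha_k))$ has rank one, generated by the line bundle corresponding (up to sign convention) to the fundamental weight $\lambda_k$. Hence for any $t \in \mathbb{Z}$, the twist $E_\lambda(t)$ is again an irreducible homogeneous bundle, whose highest weight differs from $\lambda$ by $-t\lambda_k$. So computing $H^i(X, E_\lambda(t))$ reduces to computing the cohomology of the irreducible homogeneous bundle with highest weight $\lambda - t\lambda_k$, and the first step is simply to identify $E_\lambda(t) \cong E_{\lambda - t\lambda_k}$ using the dictionary between homogeneous bundles and $P$-representations recorded in the Preliminaries.

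The main step is to apply the Borel--Bott--Weil theorem (Theorem~\ref{borel bott weil}) to each such twist. The theorem gives a clean trichotomy for the shifted weight $\mu_t := (\lambda - t\lambda_k) + \rho = \lambda + \rho - t\lambda_k$: either (a) $\mu_t$ is singular, in which case all cohomologies of $E_\lambda(t)$ vanish; or (b) $\mu_t$ is regular of some index $p$, in which case $H^i(X, E_\lambda(t))$ is nonzero precisely for $i = p$. In particular, for each fixed $t$ at most one cohomology group of $E_\lambda(t)$ is nonzero, and its degree is read off from the index of $\mu_t$.

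Finally I would translate the ACM condition into this language. By definition $E_\lambda$ is ACM exactly when $H^i(X, E_\lambda(t)) = 0$ for every $t \in \mathbb{Z}$ and every $1 \le i \le \dim X - 1$. Combined with the trichotomy, this is equivalent to requiring that for each $t$ the (at most one) potentially nonzero cohomological degree either does not exist (case (a), matching condition 3 of the lemma), or equals $0$ (case (b) with $p = 0$, matching condition 1), or equals $\dim X$ (case (b) with $p = \dim X$, matching condition 2). This is exactly the stated equivalence.

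I do not expect a serious conceptual obstacle: the argument is essentially a direct reformulation of Borel--Bott--Weil using the single-degree concentration of cohomology. The only point that requires care is the bookkeeping with the sign convention on the Picard generator, namely verifying that $\mathcal{O}_X(1)$ corresponds to a character such that twisting by $\mathcal{O}_X(t)$ shifts the highest weight by $-t\lambda_k$; once this is pinned down, the weight $\lambda + \rho - t\lambda_k$ appearing in the lemma matches exactly the input of the Borel--Bott--Weil computation, and the three cases drop out immediately.
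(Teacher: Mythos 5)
Your argument is exactly the paper's proof, which simply cites the definition of ACM bundles together with the Borel--Bott--Weil theorem; you have just made explicit the identification $E_\lambda(t)\cong E_{\lambda-t\lambda_k}$ and the single-degree concentration of cohomology that the paper leaves implicit. The sign convention you worry about is harmless here, since $t$ ranges over all of $\mathbb{Z}$, so the proposal is correct and essentially identical to the paper's reasoning.
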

\begin{proof}
	The result follows from the definition of ACM bundles and the Borel--Bott--Weil Theorem.
\end{proof}

\begin{proof}[Proof of Theorem \ref{Thm}] We examine these conditions by using Lemma \ref{Lem2}.
\begin{lem}\ $\lambda+\rho-t\lambda_{k}$ is regular of index $0$ if and only if $t<1$.
	\end{lem}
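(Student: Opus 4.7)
The plan is to evaluate the pairing $(\lambda+\rho-t\lambda_k,\alpha)$ directly for every $\alpha\in\Phi^{+}$ and read off when it is strictly positive across the board. The first step is to use the fact that $E_\lambda$ is initialized: by Lemma \ref{initial} the coefficient of $\lambda_k$ in $\lambda$ is $0$, and by Remark \ref{dominant} all other coefficients are non-negative, so one can write $\lambda=\sum_{i\neq k}a_i\lambda_i$ with $a_i\ge 0$. Substituting $\rho=\sum_i \lambda_i$ gives the convenient decomposition
$$\lambda+\rho-t\lambda_k=\sum_{i\neq k}(a_i+1)\lambda_i+(1-t)\lambda_k,$$
in which every coefficient except possibly $(1-t)$ is strictly positive.

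Next I would invoke the defining identity $(\lambda_i,\alpha_j)=\delta_{ij}\frac{(\alpha_j,\alpha_j)}{2}$ of the fundamental weights to expand the pairing. For an arbitrary positive root $\alpha=\sum_j n_j\alpha_j$ with $n_j\ge 0$ (and at least one $n_j>0$), this gives
$$(\lambda+\rho-t\lambda_k,\alpha)=\sum_{i\neq k}(a_i+1)n_i\,\tfrac{(\alpha_i,\alpha_i)}{2}+(1-t)n_k\,\tfrac{(\alpha_k,\alpha_k)}{2}.$$
This single formula is what drives both directions of the equivalence.

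For the \emph{if} direction, assume $t<1$, so that $1-t>0$; then every coefficient on the right-hand side is strictly positive, and since some $n_j$ is positive, the whole pairing is strictly positive for every $\alpha\in\Phi^{+}$. By the definition of regularity this says $\lambda+\rho-t\lambda_k$ is regular of index $0$. For the \emph{only if} direction, I would simply test the positive root $\alpha=\alpha_k$: the formula collapses to $(1-t)\tfrac{(\alpha_k,\alpha_k)}{2}$, which equals $0$ when $t=1$ (making $\lambda+\rho-t\lambda_k$ singular) and is strictly negative when $t>1$ (producing a positive root with negative pairing, so the weight is not of index $0$). Hence $t\ge 1$ forces failure.

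There is really no deep obstacle here; the content is bookkeeping with the Killing form and the fundamental weights. The only point requiring mild care is to explicitly invoke that $a_k=0$ for an initialized bundle (Lemma \ref{initial}) so that the coefficient of $\lambda_k$ in $\lambda+\rho-t\lambda_k$ is exactly $1-t$ and not some unknown $a_k+1-t$; without this observation the clean dichotomy at $t=1$ would not appear.
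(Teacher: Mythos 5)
Your proof is correct and follows essentially the same route as the paper: both use Lemma \ref{initial} and Remark \ref{dominant} to write $\lambda+\rho-t\lambda_k=\sum_{i\neq k}(a_i+1)\lambda_i+(1-t)\lambda_k$ and then reduce regularity of index $0$ to positivity of the coefficient $1-t$ (the paper phrases this as strong dominance, you expand the Killing form and test $\alpha_k$ explicitly). No gaps; your slightly more explicit pairing computation is just a spelled-out version of the paper's argument.
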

	\begin{proof}
		Suppose that $\lambda+\rho-t\lambda_{k}$ is regular of index $0$. Then, $\lambda+\rho-t\lambda_{k}=\sum_{i\neq k}(1+a_{i})\lambda_{i}+(1-t+a_{k})\lambda_{k}$ is strongly dominant, i.e., $a_{i}+1>0~(i \neq k)$ and $1+a_{k}-t>0$. Now, as $a_{i}\geq0$ for $i \neq k$ (Remark \ref{dominant}), we only consider the case $1+a_{k}-t>0$. By initializing (Lemma \ref{initial}), we obtain $a_{k}=0$. Therefore, $t<1$. Conversely, suppose $t<1$. Using the same argument, $\lambda+\rho-t\lambda_{k}$ is regular of index $0$. Hence, $\lambda+\rho-t\lambda_{k}$ is regular of index $0$ if and only if $t<1$.
	\end{proof}
	Next, we consider case $(2)$.
	\begin{lem}\ $\lambda+\rho-t\lambda_{k}$ is regular of index $\dim G/P({\alpha_{k}})$ if and only if $t>M^{G}_{\lambda,k}$.
	\end{lem}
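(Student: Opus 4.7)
The plan is to mirror the previous lemma but now work at the ``upper end'' of the inequalities. I would start by recalling that $\dim G/P(\alpha_k) = \#\Phi^+_{k,G}$, since the tangent space at the basepoint decomposes into root spaces indexed precisely by the positive roots whose expansion in simple roots contains $\alpha_k$, which is the same condition as $(\lambda_k,\alpha)\neq 0$ (because for $\alpha=\sum_j c_j\alpha_j$ and the fundamental weight $\lambda_k$, one has $(\lambda_k,\alpha)=c_k(\lambda_k,\alpha_k)$ with $(\lambda_k,\alpha_k)>0$). So the index-$\dim G/P(\alpha_k)$ condition is that \emph{every} root in $\Phi^+_{k,G}$ contributes a negative pairing with $\lambda+\rho-t\lambda_k$, and moreover that none of the pairings vanish.

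Next I would split $\Phi^+_G = \Phi^+_{k,G} \sqcup (\Phi^+_G\setminus \Phi^+_{k,G})$ and analyze each piece. For $\alpha\in\Phi^+_G\setminus\Phi^+_{k,G}$ we have $(\lambda_k,\alpha)=0$, so $(\lambda+\rho-t\lambda_k,\alpha)=(\lambda+\rho,\alpha)$; since $\rho$ is strongly dominant and $\lambda$ is dominant on the non-$k$ directions (using Remark \ref{dominant}), this pairing is strictly positive, contributing neither to singularity nor to the index regardless of $t$. For $\alpha\in\Phi^+_{k,G}$ we have $(\lambda_k,\alpha)>0$, so
\[
(\lambda+\rho-t\lambda_k,\alpha)<0 \iff t>\frac{(\lambda+\rho,\alpha)}{(\lambda_k,\alpha)}, \qquad (\lambda+\rho-t\lambda_k,\alpha)=0 \iff t=\frac{(\lambda+\rho,\alpha)}{(\lambda_k,\alpha)}.
\]
Thus $\lambda+\rho-t\lambda_k$ is regular of index $\dim G/P(\alpha_k)=\#\Phi^+_{k,G}$ exactly when $t$ strictly exceeds every element of $T^G_{\lambda,k}$, i.e.\ $t>M^G_{\lambda,k}$.

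Finally, for the converse I would note that if $t\leq M^G_{\lambda,k}$, then either $t$ equals some $\tfrac{(\lambda+\rho,\alpha)}{(\lambda_k,\alpha)}\in T^G_{\lambda,k}$ (so $\lambda+\rho-t\lambda_k$ is singular), or $t<M^G_{\lambda,k}$ strictly, in which case the root $\alpha$ achieving the maximum gives $(\lambda+\rho-t\lambda_k,\alpha)>0$, so the index drops below $\#\Phi^+_{k,G}$. Either way the desired regularity condition fails.

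I do not expect any serious obstacle here: the argument is essentially a bookkeeping splitting of $\Phi^+_G$ paired with the defining formula for $T^G_{\lambda,k}$. The only subtle point is verifying the equivalence $\alpha\in\Phi^+_{k,G}\Leftrightarrow$ the $\alpha_k$-coefficient of $\alpha$ is positive, together with the identification $\dim G/P(\alpha_k)=\#\Phi^+_{k,G}$; both are standard facts about parabolic subgroups, but they should be stated explicitly since the whole proof hinges on them.
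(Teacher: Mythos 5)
Your proof is correct and follows essentially the same route as the paper: split $\Phi^+_G$ into $\Phi^+_{k,G}$ and its complement, observe that roots outside $\Phi^+_{k,G}$ always pair strictly positively with $\lambda+\rho-t\lambda_k$, and note that all roots of $\Phi^+_{k,G}$ pair negatively precisely when $t$ exceeds every element of $T^G_{\lambda,k}$, i.e.\ $t>M^G_{\lambda,k}$. The only difference is that you spell out the standard facts $\dim G/P(\alpha_k)=\#\Phi^+_{k,G}$ and $(\lambda_k,\alpha)>0$ for $\alpha\in\Phi^+_{k,G}$, which the paper uses implicitly.
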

	\begin{proof}
Since $\dim G/P({\alpha_{k}})=|\Phi_{k,G}^{+}|$, it's equivalent to prove that $\lambda+\rho-t\lambda_{k}$ is regular of index $|\Phi_{k,G}^{+}|$ if and only if $t>M^{G}_{\lambda,k}$. Note that $(\lambda+\rho-t\lambda_{k},\alpha)=(\lambda+\rho,\alpha)-(\lambda_{k},\alpha)\cdot t$. Thus, for any $\alpha\in \Phi^{+}_{G}\setminus\Phi_{k,G}^{+}$, $(\lambda+\rho-t\lambda_{k},\alpha)=(\lambda+\rho,\alpha)>0$, because $a_{i}\geq0~(i \neq k)$ and $(\rho,\alpha)>0$.

Suppose that $t>M^{G}_{\lambda,k}$. By the definition of $M^{G}_{\lambda,k}$, the pairing value $(\lambda+\rho-t\lambda_{k},\alpha)$ is negative for any $\alpha \in \Phi_{k,G}^{+}$. Therefore, $\lambda+\rho-t\lambda_{k}$ is regular of index $|\Phi_{k,G}^{+}|$. Conversely, if $\lambda+\rho-t\lambda_{k}$ is regular of index $|\Phi_{k,G}^{+}|$, then the pairing value $(\lambda+\rho-t\lambda_{k},\alpha)$ is negative for all $\alpha \in \Phi_{k,G}^{+}$. In particular, $t>M^{G}_{\lambda,k}$.
	\end{proof}
	Therefore, $E_{\lambda}$ being an ACM bundle is equivalent to $\lambda+\rho-t\lambda_{k}$ being singular for all integers $t \in [1,M^{G}_{\lambda,k}]$ (i.e., there exists a positive root $\alpha$ such that $(\lambda+\rho-t\lambda_{k},\alpha)=0$ for any integer $t \in [1,M^{G}_{\lambda,k}]$). If $\alpha$ is in $\Phi^{+}_{G}\setminus\Phi_{k,G}^{+}$, then the pairing value $(\lambda+\rho-t\lambda_{k},\alpha)=(\lambda+\rho,\alpha)$. As $a_{i}\geq0~(i \neq k)$ and $(\rho,\alpha)>0$, this is positive. Hence, we consider only the case in which $\alpha$ is in $\Phi_{k,G}^{+}$.
	\begin{lem}\ $\lambda+\rho-t\lambda_{k}$ is singular if and only if $t\in T_{\lambda,k}^{G}$.
	\end{lem}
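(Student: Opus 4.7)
The plan is to directly unpack the definition of singularity and use the dichotomy of positive roots into those orthogonal to $\lambda_k$ and those not. A weight $\mu$ is singular precisely when there exists some $\alpha\in\Phi^+_G$ with $(\mu,\alpha)=0$, so I want to classify the possible witnesses $\alpha$ when $\mu=\lambda+\rho-t\lambda_k$.

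First I would split $\Phi^+_G = \Phi_{k,G}^+ \sqcup (\Phi^+_G\setminus\Phi_{k,G}^+)$ and observe that for $\alpha\in\Phi^+_G\setminus\Phi_{k,G}^+$ one has $(\lambda_k,\alpha)=0$ by definition, hence $(\lambda+\rho-t\lambda_k,\alpha)=(\lambda+\rho,\alpha)$. This quantity is strictly positive: writing $\lambda=\sum a_i\lambda_i$ with $a_i\geq 0$ for $i\neq k$ (Remark on dominance) and $a_k=0$ (Lemma on initialization), we get $(\lambda,\alpha)\geq 0$, while $(\rho,\alpha)>0$ since $\rho$ is strongly dominant and $\alpha$ is a positive root. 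This is precisely the reduction already made in the paragraph preceding the lemma, so no positive root outside $\Phi_{k,G}^+$ can witness singularity.

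Next I would examine $\alpha\in\Phi_{k,G}^+$, where by definition $(\lambda_k,\alpha)\neq 0$. The equation
\[
(\lambda+\rho-t\lambda_k,\alpha) = (\lambda+\rho,\alpha) - t\,(\lambda_k,\alpha) = 0
\]
is linear in $t$ with nonzero coefficient, so it has the unique solution $t = (\lambda+\rho,\alpha)/(\lambda_k,\alpha)$, which is an element of $T_{\lambda,k}^G$. Thus $\lambda+\rho-t\lambda_k$ admits a witness to singularity in $\Phi_{k,G}^+$ if and only if $t\in T_{\lambda,k}^G$.

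Combining the two cases yields the equivalence. There is no real obstacle here: the statement is essentially a tautology once one separates the two families of positive roots, and the only nontrivial input is the strict positivity $(\lambda+\rho,\alpha)>0$ for $\alpha\in\Phi^+_G\setminus\Phi_{k,G}^+$, which has already been justified just above the lemma.
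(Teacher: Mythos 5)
Your proof is correct and follows essentially the same route as the paper: the paper performs the reduction to witnesses $\alpha\in\Phi_{k,G}^{+}$ (via the positivity of $(\lambda+\rho,\alpha)$ for $\alpha\in\Phi^{+}_{G}\setminus\Phi_{k,G}^{+}$) in the paragraph immediately preceding the lemma, and then solves the linear equation $(\lambda+\rho,\alpha)-t(\lambda_k,\alpha)=0$ exactly as you do. The only difference is that you fold that reduction into the proof itself, which is fine.
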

	\begin{proof} If $\lambda+\rho-t\lambda_{k}$ is singular, there exists a positive root $\alpha$ in $\Phi_{k,G}^{+}$ such that
		$$0=(\lambda+\rho-t\lambda_{k},\alpha)=(\lambda+\rho,\alpha)-(\lambda_{k},\alpha)\cdot t.$$
		As $\alpha$ is in $\Phi_{k,G}^{+}$, $(\lambda_{k},\alpha)$ is not equal to zero. Thus, $t=\frac{1}{(\lambda_{k},\alpha)}(\lambda+\rho,\alpha)$. By the definition of $T_{\lambda,k}^{G}$, $t\in T_{\lambda,k}^{G}$. Conversely, if $t$ is in $T_{\lambda,k}^{G}$, there exists a positive root $\alpha$ in $\Phi_{k,G}^{+}$ such that $t=\frac{1}{(\lambda_{k},\alpha)}(\lambda+\rho,\alpha)$. Then,
		$$0=(\lambda+\rho,\alpha)-(\lambda_{k},\alpha)\cdot t=(\lambda+\rho-t\lambda_{k},\alpha).$$
		Therefore, $\lambda+\rho-t\lambda_{k}$ is singular.
	\end{proof}
	Therefore, in summary, $E_{\lambda}$ is an ACM vector bundle if and only if $n_l\geq1$ for any integer $l\in [1,M^G_{k,\lambda}]$.
\end{proof}

Now we can prove Corollary \ref{Cor} which is directly induced by our main theorem.

\begin{proof}[Proof of Corollary \ref{Cor}]Let $E_{\lambda}$ be an irreducible homogeneous vector bundle with $\lambda=\sum_{i=1}^{n}a_{i}\lambda_{i}$. We assume that $a_{k}=0$, without loss of generality. As $\dim G/P({\alpha_{k}})=|\Phi_{k,G}^{+}|$, $T^{G}_{\lambda,k}$ has at most $\dim G/P({\alpha_{k}})$ different elements. Thus if $M^{G}_{\lambda,k}>{\dim}\ G/P(\alpha_{k})$, there exists an integer $l \in[1,M^{G}_{\lambda,k}]$ such that $n_{l}=0$. By the main theorem, $E_{\lambda}$ is not an ACM bundle. Therefore, if $E_{\lambda}$ is an ACM bundle, then $M^{G}_{\lambda,k}\leq {\rm dim}\ G/P(\alpha_{k})$. As $M^{G}_{\lambda,k}$ is a linear combination of $a_{i}~(i \neq k)$ with positive coefficients and $a_{i}\geq0~(i \neq k)$, there exist only a finite number of choices for $a_{i}$. Therefore, there exist only a finite number of irreducible homogeneous ACM bundles.
\end{proof}


\subsection{ Homogeneous ACM bundles on exceptional Grassmannians}
In this subsection, we give a further observation of $T_{\lambda,k}^{G}$ on homogeneous varieties of different types. When $G$ is a simple Lie group of type $A$, $B$, $C$ or $D$, the concrete form of $T_{\lambda,k}^{G}$ has already been given in \cite{costa2016homogeneous} and \cite{du2022homogeneous} (see the definition of step matrix). Therefore, we only need to consider the concrete form of $T_{\lambda,k}^{G}$ when $G$ is a simple Lie group of type $E_n(n=6,7,8)$, $F_4$ or $G_2$.

 To give a more concrete description, we need to know the explicit form of positive roots.
In \cite{bourbaki2005elements}{ Plate V-IX,} we have already know the form of the positive roots under orthonormal basis.
\begin{lem}

    \[\Phi_{E_6}^+=\{\pm e_i-e_j~(1\leq i<j\leq 5),\frac{1}{2}(e_8-e_7-e_6+\sum_{k=1}^5(-1)^{v(k)}e_k)\};\]
    \[\Phi_{E_7}^+=\Phi_{E_6}^+\cup\{\pm e_i+e_6~(i<6),e_8-e_7,\frac{1}{2}(e_8-e_7+e_6+\sum_{k=1}^5(-1)^{v(k)}e_k)\};\]
    \[\Phi_{E_8}^+=\Phi_{E_7}^+\cup\{\pm e_i+e_7,\pm e_i+e_8~(i<7),e_8+e_7,\frac{1}{2}(e_8+e_7+\sum_{k=1}^6(-1)^{u(k)}e_k)\},\]
    where $\sum_{k=1}^5v(k)$ is even and $\sum_{k=1}^6u(k)$ is odd.
    \[\Phi_{F_4}^+=\{e_i~(1\leq i\leq 4),e_i\pm e_j(1\leq i<j\leq 4),\frac{1}{2}(e_1\pm e_2\pm e_3\pm e_4)\}.\]
  \[\Phi^+_{G_2}=\{e_1-e_2,-2e_1+e_2+e_3,-e_1+e_3,-e_2+e_3,e_1-2e_2+e_3,-e_1-e_2+2e_3
    \}.\]
\end{lem}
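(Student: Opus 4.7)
The plan is to deduce this lemma directly from the standard realizations of the exceptional root systems in Euclidean space, essentially by transcribing Bourbaki's Plates V--IX into the notation fixed in Section 2.1. The proof is therefore more of a verification than a discovery: I would write down the simple roots $\alpha_1,\ldots,\alpha_n$ explicitly in terms of the orthonormal basis $e_1,\ldots,e_n$, confirm that their Gram matrix reproduces the Cartan matrix dictated by the Dynkin diagrams drawn in Section 2.1, and then enumerate all positive roots as the appropriate non-negative integral combinations of these simple roots.

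For the $E$-series I would use the nested realization in $\mathbb{R}^8$: $E_8$ has $240$ roots, namely all vectors $\pm e_i \pm e_j$ for $1 \le i < j \le 8$ together with all half-integer vectors $\tfrac{1}{2}\sum_{k=1}^{8}\varepsilon_k e_k$ with $\varepsilon_k = \pm 1$ and $\prod_k \varepsilon_k = +1$. Then $E_7$ is realized as the subsystem orthogonal to $e_7 + e_8$ (which explains why the combination $e_8 - e_7$ is singled out in the half-integer roots), and $E_6$ as the further subsystem orthogonal to $e_6 + e_7$. For $F_4$ the $48$ roots split into $24$ long roots $\pm e_i \pm e_j$ and $24$ short roots $\pm e_i$ together with $\tfrac{1}{2}(\pm e_1 \pm e_2 \pm e_3 \pm e_4)$; the choice of simple roots compatible with the labeling in the diagram determines which half of each $\pm$ pair is positive. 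For $G_2$ I would use the standard realization inside the hyperplane $x_1 + x_2 + x_3 = 0$ in $\mathbb{R}^3$, directly reading off the six short and six long positive roots.

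Once the simple roots are chosen and a dominant chamber fixed, the verification that the listed set equals $\Phi^+_G$ is a finite bookkeeping exercise: each listed vector is expanded in the simple-root basis and its coefficients are confirmed to be non-negative, while a length and parity count (using $|\Phi^+_{E_n}| = 36, 63, 120$ for $n = 6, 7, 8$ and $|\Phi^+_{F_4}| = 24$, $|\Phi^+_{G_2}| = 6$) rules out additional positive roots. The main obstacle I anticipate is purely notational rather than conceptual: conventions for labeling the nodes $\alpha_1,\ldots,\alpha_n$ of the $E_n$ and $F_4$ Dynkin diagrams vary across references, so I would need to carefully reconcile the labeling used in the excerpt with the Bourbaki convention before copying over the positive roots; the parity conditions $\sum v(k)$ even and $\sum u(k)$ odd on the half-integer roots are precisely what records this reconciliation.
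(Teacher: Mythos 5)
Your approach is essentially the paper's: the paper offers no argument for this lemma beyond citing Bourbaki, Plates V--IX, and your plan is exactly that transcription from the standard Euclidean realizations plus a finite verification against the simple roots and the root counts $36, 63, 120, 24, 6$. One small slip worth fixing: in the Bourbaki chain $E_6\subset E_7\subset E_8$ the copy of $E_6$ is the subsystem of $E_7$ orthogonal to $e_6-e_7$ (equivalently to $e_6+e_8$), not to $e_6+e_7$ --- the half-integer roots $\tfrac{1}{2}(e_8-e_7-e_6+\cdots)$ pair nontrivially with $e_6+e_7$ --- but your check of expansions in the simple-root basis would catch and correct this.
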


For convenience, we express positive roots as  combinations of simple roots. After some simple calculations, we can get the following lemma (we use $(m_1,\dots,m_n)$ to represent $ m_1\alpha_1+\dots+m_n\alpha_n$ and list them in terms of the lexicographical order).

  \begin{lem}\label{posroot}

  \begin{enumerate}
      \item Let $\alpha_1,\dots,\alpha_8$ be the simple roots of $E_8$ and $\Phi_P^+,$ $\Phi_Q^+$ and $\Phi_R^+$ be the subset of the positive roots of $E_8$, where

$\Phi_P^+=\{$
\begin{footnotesize}
\[\begin{matrix}
(	0	,	0	,	0	,	0	,	0	,	1	,	0	,	0	)	,&
(	0	,	0	,	0	,	0	,	1	,	0	,	0	,	0	)	,&
(	0	,	0	,	0	,	0	,	1	,	1	,	0	,	0	)	,&
(	0	,	0	,	0	,	1	,	0	,	0	,	0	,	0	)	,&
(	0	,	0	,	0	,	1	,	1	,	0	,	0	,	0	)	,
\\
(	0	,	0	,	0	,	1	,	1	,	1	,	0	,	0	)	,&
(	0	,	0	,	1	,	0	,	0	,	0	,	0	,	0	)	,&
(	0	,	0	,	1	,	1	,	0	,	0	,	0	,	0	)	,&
(	0	,	0	,	1	,	1	,	1	,	0	,	0	,	0	)	,&
(	0	,	0	,	1	,	1	,	1	,	1	,	0	,	0	)	,
\\
(	0	,	1	,	0	,	0	,	0	,	0	,	0	,	0	)	,&
(	0	,	1	,	0	,	1	,	0	,	0	,	0	,	0	)	,&
(	0	,	1	,	0	,	1	,	1	,	0	,	0	,	0	)	,&
(	0	,	1	,	0	,	1	,	1	,	1	,	0	,	0	)	,&
(	0	,	1	,	1	,	1	,	0	,	0	,	0	,	0	)	,
\\
(	0	,	1	,	1	,	1	,	1	,	0	,	0	,	0	)	,&
(	0	,	1	,	1	,	1	,	1	,	1	,	0	,	0	)	,&
(	0	,	1	,	1	,	2	,	1	,	0	,	0	,	0	)	,&
(	0	,	1	,	1	,	2	,	1	,	1	,	0	,	0	)	,&
(	0	,	1	,	1	,	2	,	2	,	1	,	0	,	0	)	,
\\
(	1	,	0	,	0	,	0	,	0	,	0	,	0	,	0	)	,&
(	1	,	0	,	1	,	0	,	0	,	0	,	0	,	0	)	,&
(	1	,	0	,	1	,	1	,	0	,	0	,	0	,	0	)	,&
(	1	,	0	,	1	,	1	,	1	,	0	,	0	,	0	)	,&
(	1	,	0	,	1	,	1	,	1	,	1	,	0	,	0	)	,
\\
(	1	,	1	,	1	,	1	,	0	,	0	,	0	,	0	)	,&
(	1	,	1	,	1	,	1	,	1	,	0	,	0	,	0	)	,&
(	1	,	1	,	1	,	1	,	1	,	1	,	0	,	0	)	,&
(	1	,	1	,	1	,	2	,	1	,	0	,	0	,	0	)	,&
(	1	,	1	,	1	,	2	,	1	,	1	,	0	,	0	)	,
\\
(	1	,	1	,	1	,	2	,	2	,	1	,	0	,	0	)	,&
(	1	,	1	,	2	,	2	,	1	,	0	,	0	,	0	)	,&
(	1	,	1	,	2	,	2	,	1	,	1	,	0	,	0	)	,&
(	1	,	1	,	2	,	2	,	2	,	1	,	0	,	0	)	,&
(	1	,	1	,	2	,	3	,	2	,	1	,	0	,	0	)	,
\\
(	1	,	2	,	2	,	3	,	2	,	1	,	0	,	0	)\};& &&&
\end{matrix}\]
\end{footnotesize}

$\Phi_Q^+=\{$
\begin{footnotesize}
\[\begin{matrix}
(	0	,	0	,	0	,	0	,	0	,	0	,	1	,	0	)	,&	
(	0	,	0	,	0	,	0	,	0	,	1	,	1	,	0	)	,&	
(	0	,	0	,	0	,	0	,	1	,	1	,	1	,	0	)	,&	
(	0	,	0	,	0	,	1	,	1	,	1	,	1	,	0	)	,&	
(	0	,	0	,	1	,	1	,	1	,	1	,	1	,	0	)	,&	\\
(	0	,	1	,	0	,	1	,	1	,	1	,	1	,	0	)	,&	
(	0	,	1	,	1	,	1	,	1	,	1	,	1	,	0	)	,&	
(	0	,	1	,	1	,	2	,	1	,	1	,	1	,	0	)	,&	
(	0	,	1	,	1	,	2	,	2	,	1	,	1	,	0	)	,&	
(	0	,	1	,	1	,	2	,	2	,	2	,	1	,	0	)	,&	\\
(	1	,	0	,	1	,	1	,	1	,	1	,	1	,	0	)	,&	
(	1	,	1	,	1	,	1	,	1	,	1	,	1	,	0	)	,&	
(	1	,	1	,	1	,	2	,	1	,	1	,	1	,	0	)	,&	
(	1	,	1	,	1	,	2	,	2	,	1	,	1	,	0	)	,&	
(	1	,	1	,	1	,	2	,	2	,	2	,	1	,	0	)	,&	\\
(	1	,	1	,	2	,	2	,	1	,	1	,	1	,	0	)	,&	
(	1	,	1	,	2	,	2	,	2	,	1	,	1	,	0	)	,&	
(	1	,	1	,	2	,	2	,	2	,	2	,	1	,	0	)	,&	
(	1	,	1	,	2	,	3	,	2	,	1	,	1	,	0	)	,&	
(	1	,	1	,	2	,	3	,	2	,	2	,	1	,	0	)	,&	\\
(	1	,	1	,	2	,	3	,	3	,	2	,	1	,	0	)	,&	
(	1	,	2	,	2	,	3	,	2	,	1	,	1	,	0	)	,&	
(	1	,	2	,	2	,	3	,	2	,	2	,	1	,	0	)	,&	
(	1	,	2	,	2	,	3	,	3	,	2	,	1	,	0	)	,&	
(	1	,	2	,	2	,	4	,	3	,	2	,	1	,	0	)	,&	\\
(	1	,	2	,	3	,	4	,	3	,	2	,	1	,	0	)	,&	
(	2	,	2	,	3	,	4	,	3	,	2	,	1	,	0	)	\};&	

\end{matrix}\]
\end{footnotesize}

$\Phi_R^+=\{$
\begin{footnotesize}
\[\begin{matrix}
(	0	,	0	,	0	,	0	,	0	,	0	,	0	,	1	)	,&	
(	0	,	0	,	0	,	0	,	0	,	0	,	1	,	1	)	,&	
(	0	,	0	,	0	,	0	,	0	,	1	,	1	,	1	)	,&	
(	0	,	0	,	0	,	0	,	1	,	1	,	1	,	1	)	,&	
(	0	,	0	,	0	,	1	,	1	,	1	,	1	,	1	)	,&	\\
(	0	,	0	,	1	,	1	,	1	,	1	,	1	,	1	)	,&	
(	0	,	1	,	0	,	1	,	1	,	1	,	1	,	1	)	,&	
(	0	,	1	,	1	,	1	,	1	,	1	,	1	,	1	)	,&	
(	0	,	1	,	1	,	2	,	1	,	1	,	1	,	1	)	,&	
(	0	,	1	,	1	,	2	,	2	,	1	,	1	,	1	)	,&	\\
(	0	,	1	,	1	,	2	,	2	,	2	,	1	,	1	)	,&	
(	0	,	1	,	1	,	2	,	2	,	2	,	2	,	1	)	,&	
(	1	,	0	,	1	,	1	,	1	,	1	,	1	,	1	)	,&	
(	1	,	1	,	1	,	1	,	1	,	1	,	1	,	1	)	,&	
(	1	,	1	,	1	,	2	,	1	,	1	,	1	,	1	)	,&	\\
(	1	,	1	,	1	,	2	,	2	,	1	,	1	,	1	)	,&	
(	1	,	1	,	1	,	2	,	2	,	2	,	1	,	1	)	,&	
(	1	,	1	,	1	,	2	,	2	,	2	,	2	,	1	)	,&	
(	1	,	1	,	2	,	2	,	1	,	1	,	1	,	1	)	,&	
(	1	,	1	,	2	,	2	,	2	,	1	,	1	,	1	)	,&	\\
(	1	,	1	,	2	,	2	,	2	,	2	,	1	,	1	)	,&	
(	1	,	1	,	2	,	2	,	2	,	2	,	2	,	1	)	,&	
(	1	,	1	,	2	,	3	,	2	,	1	,	1	,	1	)	,&	
(	1	,	1	,	2	,	3	,	2	,	2	,	1	,	1	)	,&	
(	1	,	1	,	2	,	3	,	2	,	2	,	2	,	1	)	,&	\\
(	1	,	1	,	2	,	3	,	3	,	2	,	1	,	1	)	,&	
(	1	,	1	,	2	,	3	,	3	,	2	,	2	,	1	)	,&	
(	1	,	1	,	2	,	3	,	3	,	3	,	2	,	1	)	,&	
(	1	,	2	,	2	,	3	,	2	,	1	,	1	,	1	)	,&	
(	1	,	2	,	2	,	3	,	2	,	2	,	1	,	1	)	,&	\\
(	1	,	2	,	2	,	3	,	2	,	2	,	2	,	1	)	,&	
(	1	,	2	,	2	,	3	,	3	,	2	,	1	,	1	)	,&	
(	1	,	2	,	2	,	3	,	3	,	2	,	2	,	1	)	,&	
(	1	,	2	,	2	,	3	,	3	,	3	,	2	,	1	)	,&	
(	1	,	2	,	2	,	4	,	3	,	2	,	1	,	1	)	,&	\\
(	1	,	2	,	2	,	4	,	3	,	2	,	2	,	1	)	,&	
(	1	,	2	,	2	,	4	,	3	,	3	,	2	,	1	)	,&	
(	1	,	2	,	2	,	4	,	4	,	3	,	2	,	1	)	,&	
(	1	,	2	,	3	,	4	,	3	,	2	,	1	,	1	)	,&	
(	1	,	2	,	3	,	4	,	3	,	2	,	2	,	1	)	,&	\\
(	1	,	2	,	3	,	4	,	3	,	3	,	2	,	1	)	,&	
(	1	,	2	,	3	,	4	,	4	,	3	,	2	,	1	)	,&	
(	1	,	2	,	3	,	5	,	4	,	3	,	2	,	1	)	,&	
(	1	,	3	,	3	,	5	,	4	,	3	,	2	,	1	)	,&	
(	2	,	2	,	3	,	4	,	3	,	2	,	1	,	1	)	,&	\\
(	2	,	2	,	3	,	4	,	3	,	2	,	2	,	1	)	,&	
(	2	,	2	,	3	,	4	,	3	,	3	,	2	,	1	)	,&	
(	2	,	2	,	3	,	4	,	4	,	3	,	2	,	1	)	,&	
(	2	,	2	,	3	,	5	,	4	,	3	,	2	,	1	)	,&	
(	2	,	2	,	4	,	5	,	4	,	3	,	2	,	1	)	,&	\\
(	2	,	3	,	3	,	5	,	4	,	3	,	2	,	1	)	,&	
(	2	,	3	,	4	,	5	,	4	,	3	,	2	,	1	)	,&	
(	2	,	3	,	4	,	6	,	4	,	3	,	2	,	1	)	,&	
(	2	,	3	,	4	,	6	,	5	,	3	,	2	,	1	)	,&	
(	2	,	3	,	4	,	6	,	5	,	4	,	2	,	1	)	,&	\\
(	2	,	3	,	4	,	6	,	5	,	4	,	3	,	1	)	,&	
(	2	,	3	,	4	,	6	,	5	,	4	,	3	,	2	)	\}.&	

\end{matrix}\]
\end{footnotesize}
Then $\Phi_{E_6}^+=\Phi_P^+$, $\Phi_{E_7}^+=\Phi_P^+\cup \Phi_Q^+$ and $\Phi_{E_8}^+=\Phi_P^+\cup \Phi_Q^+\cup \Phi_R^+$.

      \item Let $\alpha_1,\dots,\alpha_4$ be the simple roots of $F_4$. Then

 $\Phi_{F_4}^+=\{$
\[\begin{matrix}
(	0	,	0	,	0	,	1	)	,	&	
(	0	,	0	,	1	,	0	)	,	&	
(	0	,	0	,	1	,	1	)	,	&	
(	0	,	1	,	0	,	0	)	,	&	
(	0	,	1	,	1	,	0	)	,	&	
(	0	,	1	,	1	,	1	)	,		\\
(	0	,	1	,	2	,	0	)	,	&	
(	0	,	1	,	2	,	1	)	,	&	
(	0	,	1	,	2	,	2	)	,	&	
(	1	,	0	,	0	,	0	)	,	&	
(	1	,	1	,	0	,	0	)	,	&	
(	1	,	1	,	1	,	0	)	,		\\
(	1	,	1	,	1	,	1	)	,	&	
(	1	,	1	,	2	,	0	)	,	&	
(	1	,	1	,	2	,	1	)	,	&	
(	1	,	1	,	2	,	2	)	,	&	
(	1	,	2	,	2	,	0	)	,	&	
(	1	,	2	,	2	,	1	)	,		\\
(	1	,	2	,	2	,	2	)	,	&	
(	1	,	2	,	3	,	1	)	,	&	
(	1	,	2	,	3	,	2	)	,	&	
(	1	,	2	,	4	,	2	)	,	&	
(	1	,	3	,	4	,	2	)	,	&	
(	2	,	3	,	4	,	2	)	\};	
\end{matrix}\]

    \item Let $\alpha_1$ and $\alpha_2$ be the simple roots of $G_2$. Then \[\Phi^+_{G_2}=\{(0,1),(1,0),(1,1),(2,1),(3,1),(3,2)
    \}.\]

  \end{enumerate}

\end{lem}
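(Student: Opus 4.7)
The plan is to prove the lemma by a direct change-of-basis computation, since no new mathematical content appears beyond the data already recalled. Bourbaki (Plates V--IX) records explicit formulas for the simple roots $\alpha_1,\dots,\alpha_n$ of $E_6,E_7,E_8,F_4,G_2$ in the orthonormal basis $e_1,\dots,e_N$ used in the previous lemma. Since $\alpha_1,\dots,\alpha_n$ is a basis of the root lattice, each positive root $\beta$ listed there admits unique coefficients $(m_1,\dots,m_n)\in\mathbb{Z}_{\geq 0}^{n}$ with $\beta=m_1\alpha_1+\cdots+m_n\alpha_n$, and the assertion of the lemma is exactly the record of these coefficients.

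Concretely, I would first tabulate the simple roots in the $e_i$-basis for each type and invert the resulting small matrix (equivalently, apply the known inverse Cartan matrix to the vector $\bigl(2(\beta,\alpha_i)/(\alpha_i,\alpha_i)\bigr)_{i}$ computed in the orthonormal basis) to produce the tuple $(m_1,\dots,m_n)$ for a general $\beta$. Applying this routine to each positive root from the previous lemma yields the three tables. For $G_2$ (six positive roots) and $F_4$ (twenty-four positive roots) this is a brief hand calculation; for $E_6$, $E_7$, $E_8$, where the combined $E_8$ list has one hundred twenty roots, I would offload the arithmetic to a computer algebra system. The claimed filtration $\Phi^+_{E_6}=\Phi^+_P$, $\Phi^+_{E_7}=\Phi^+_P\cup\Phi^+_Q$, $\Phi^+_{E_8}=\Phi^+_P\cup\Phi^+_Q\cup\Phi^+_R$ then follows immediately from the standard fact that under the Bourbaki labeling the sub-diagram $E_6\subset E_7\subset E_8$ identifies $\Phi^+_{E_6}$ (resp.\ $\Phi^+_{E_7}$) with the positive roots of $E_8$ whose coefficients on $\alpha_7,\alpha_8$ (resp.\ on $\alpha_8$) vanish, which is precisely how the three sets $\Phi^+_P,\Phi^+_Q,\Phi^+_R$ are separated in the tables.

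As sanity checks I would verify at the end that (i) the cardinalities match, namely $|\Phi^+_P|=36$, $|\Phi^+_P|+|\Phi^+_Q|=63$, $|\Phi^+_P|+|\Phi^+_Q|+|\Phi^+_R|=120$, $|\Phi^+_{F_4}|=24$, and $|\Phi^+_{G_2}|=6$; (ii) every tuple has nonnegative integer entries; and (iii) the unique tuple of maximal height in each list agrees with the known highest root of the corresponding exceptional type. The main obstacle here is purely clerical rather than mathematical: transcribing the $E_8$ tables without arithmetic slips. Since no step requires more than linear algebra in a root system of rank at most eight, a short scripted computation combined with these consistency checks suffices to establish the lemma.
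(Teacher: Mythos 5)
Your proposal is correct and coincides with what the paper actually does: the paper's "proof" is precisely the remark that, starting from the Bourbaki orthonormal-basis lists recalled in the preceding lemma, "some simple calculations" (i.e.\ the change of basis to simple-root coordinates) yield the tables, with the $E_6\subset E_7\subset E_8$ splitting read off from the vanishing of the coefficients of $\alpha_7$ and $\alpha_8$. Your added consistency checks (cardinalities $36$, $63$, $120$, $24$, $6$ and the highest roots) are a sensible safeguard but not a different method.
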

\begin{rmk}
 Here we embed the roots of $E_6$ and $E_7$ into the roots of $E_8$. Hence we only use the roots of $E_8$ to represent others.
\end{rmk}

From the definition of the fundamental weight, it is not hard to see that \[\Phi_{k,G}^{+}=\{\alpha=\sum m_i\alpha_i\in\Phi^+_{G} |m_k\neq 0\},\]
 \[
 (\lambda_{k},\alpha)= (\lambda_{k},\sum m_i\alpha_i)=\frac{1}{2}m_k(\alpha_k,\alpha_k)
 \]
 and for $\lambda=a_1\lambda_1+\dots+a_n\lambda_n$,
\[
(\lambda+\rho,\alpha)=\frac{1}{2}\sum (a_i+1) m_i(\alpha_i,\alpha_i).
\]

Combining Lemma \ref{posroot} and the Killing forms of positive roots, we can write down the concrete form of $T^{G}_{\lambda,k}$ as follows.

 \begin{rmk}\label{T}
Let $E_\lambda$ be an irreducible homogeneous vector bundle over $G/P(\alpha_k)$ with $\lambda=a_1\lambda_1+\dots+a_n\lambda_n$. Then
\begin{itemize}
	\item \[
	T^{E_n}_{\lambda,k}=\big\{\frac{\sum_{i}(a_i+1)m_i}{m_k} |(m_1,\ldots,m_n)\in\Phi^+_{E_n},~m_k\neq 0\big\}~(n=6,7,8).
	\]
	
	\item If $k=1,2$, then
\begin{small}

\[
T^{F_4}_{\lambda,k}=\big\{\frac{2(a_1+1)m_1+2(a_2+1)m_2+(a_3+1)m_3+(a_4+1)m_4}{2m_k}\ |(m_1,\ldots,m_4)\in\Phi^+_{F_4} ,~m_k\neq 0\big\}.	
	\]\end{small}	
If $k=3,4$, then
\begin{small}
\[
T^{F_4}_{\lambda,k}=\big\{\frac{2(a_1+1)m_1+2(a_2+1)m_2+(a_3+1)m_3+(a_4+1)m_4}{ m_k}\ |(m_1,\ldots,m_4)\in\Phi^+_{F_4} ,~m_k\neq 0\big\}.
\]
\end{small}
	\item \[
T^{G_2}_{\lambda,1}=\big \{\frac{ (a_1+1)m_1+ 3(a_2+1)m_2}{m_1}\ |(m_1,m_2)\in\Phi^+_{G_2},~m_1\neq 0	\big\}
	\]
	and
\[
T^{G_2}_{\lambda,2}=\big\{                  \frac{(a_1+1)m_1+ 3(a_2+1)m_2}{3m_2}\ |(m_1,m_2)\in\Phi^+_{G_2},~m_2\neq 0	\big\}.
\]		
\end{itemize}
\end{rmk}

Let's illustrate the power of this result by means of some examples.
\begin{ex}\ Let $E_{\lambda}$ and $E_{\mu}$ be initialized irreducible homogeneous vector bundles on $E_{6}/P(\alpha_{2})$ with $\mu=2\lambda_{1}+\lambda_{3}$ and $\nu=\lambda_{4}+\lambda_{5}$. Then by Lemma \ref{posroot} and Remark \ref{T}, we know that for $\lambda=a_1\lambda_1+\dots+a_6\lambda_6$,
\begin{align*}
T_{\lambda,2}^{E_{6}}&=\big\{a_2+1,a_2+a_4+2,a_2+\sum_{i=4}^{5}a_i+3,a_2+\sum_{i=4}^{6}a_i+4,\sum_{i=2}^{4}a_i+3,\sum_{i=2}^{5}a_i+4,\sum_{i=2}^{6}a_i+5,\\
&\sum_{i=2}^{3}a_i+2a_4+a_5+5,\sum_{i=2}^{3}a_i+2a_4+\sum_{i=5}^{6}a_i+6,\sum_{i=2}^{3}a_i+2\sum_{i=4}^{5}a_i+a_6+7,\sum_{i=1}^{4}a_i+4,\\
&\sum_{i=1}^{5}a_i+5,\sum_{i=1}^{6}a_i+6,\sum_{i=1}^{3}a_i+2a_4+a_5+6,\sum_{i=1}^{3}a_i+2a_4+\sum_{i=5}^{6}a_i+7,\sum_{i=1}^{3}a_i+2\sum_{i=4}^{5}a_i+a_6+8,\\
&\sum_{i=1}^{2}a_i+2\sum_{i=3}^{4}a_i+a_5+7,\sum_{i=1}^{2}a_i+2\sum_{i=3}^{4}a_i+\sum_{i=5}^{6}a_i+8,\sum_{i=1}^{2}a_i+2\sum_{i=3}^{5}a_i+a_6+9,\\
&\sum_{i=1}^{2}a_i+2a_3+3a_4+2a_5+a_6+10,\frac{1}{2}(a_1+2\sum_{i=2}^{3}a_i+3a_4+2a_5+a_6+11)
\big\}.
\end{align*}
Hence
\[
T_{\mu,2}^{E_{6}}=\{1,2,3,4,4,5,6,
6,7,8,7,
8,9,9,10,11,
11,12,13,
14,\frac{15}{2}\}~\text{and}~ M_{\mu,2}^{E_{6}}=14,\]

\[
T_{\nu,2}^{E_{6}}=\{
1,3,5,6,4,6,7,
8,9,11,5,
7,8,9,10,12,
10,11,13,
15,8\}~\text{and}~ M_{\nu,2}^{E_{6}}=15.\]

It's easy to verify that $n_{l}\geq1$ for any integer $l\in[1,M_{\mu,2}^{E_{6}}]$ and $n_{2}=n_{14}=0$ for $T_{\nu,2}^{E_{6}}$.
Therefore, by Theorem \ref{Thm}, $E_{\mu}$ is an ACM bundle, but $E_{\nu}$ is not an ACM bundle.
\end{ex}


\begin{ex}\label{classG} \begin{enumerate}
       \item  The irreducible homogeneous ACM vector bundles over $G_2/P(\alpha_1)$ are line bundles.

     \item  The irreducible homogeneous ACM vector bundles over $G_2/P(\alpha_2)$ are line bundles, $E_{\lambda_1}$ and $E_{2\lambda_1}$ (up to tensoring a line bundle).

 \end{enumerate}

 \end{ex}
  \begin{proof}

  \begin{enumerate}
 \item For simplicity, we may assume that $E_\lambda=E_{a_1\lambda_1+a_2\lambda_2}$ be an initialized irreducible homogeneous ACM bundle on $G_2/P(\alpha_1).$ Then $a_1=0.$ By Lemma \ref{posroot} and Remark \ref{T}, we have \[T_{1,\lambda}^G=\{ 1,3a_2 + 4, \frac{3a_2}{2} + \frac{5}{2} ,a_2 +2,2a_2+ 3\}\]
  By Theorem \ref{Thm}, $2\in T_{2,\lambda}^G$, then the only choice of $a_2$ is 0. Then
  \[T_{1,\lambda}^G=\{ 1, 4, \frac{5}{2} , 2,3\}.\]
  Hence the first statement follows from Theorem \ref{Thm}.
      \item As above, we still assume that $E_\lambda=E_{a_1\lambda_1+a_2\lambda_2}$ is initialized which means $a_2=0$. Then we have \[T_{2,\lambda}^G=\{ 1,\frac{a_1}{3} +\frac{4}{3} ,\frac{2a_1}{3}  + \frac{5}{3} ,a_1 +2,\frac{a_1}{2} + \frac{3}{2}\}\]
By Theorem \ref{Thm}, $2\in T_{2,\lambda}^G$, then the only choices of $a_1$ are 0,1 and 2. One can check
  \[T_{2,\lambda}^G=\{ 1,\frac{4}{3}  ,\frac{5}{3}  , 2,\frac{3}{2} \},\{ 1,\frac{5}{3},\frac{7}{3} ,3,2\}~\text{and}~\{ 1,2, 3 ,4,\frac{5}{2}\}\]
 in these three cases. We therefore conclude by Theorem \ref{Thm}.
  \end{enumerate}
   \end{proof}
  In general, it is not hard to use computer to help us classify all irreducible homogeneous ACM bundles on exceptional Grassmannians. We list part of them in the appendix. For convenience, we only give the following corollary which we will use in the next section.

\begin{cor}\label{Class2}
Let $X$ be an exceptional Grassmannian and $F_1$, $F_2$ be two irreducible homogeneous vector bundles with the forms in Table \ref{table1}. Then $F_1$ and $F_2$ are ACM bundles.

  \begin{center}
               \begin{table}[h]
                   \centering
\begin{tabular}{|c|c|c| }
  \hline
$X$ &$F_1$ &$F_2$ \\
\hline
$E_7/P(\alpha_1)$ & $E_{2\lambda_2}$  &  $E_{2\lambda_7-2\lambda_1}$   \\\hline
$E_n/P(\alpha_2)$ & $E_{3\lambda_1}$ & $E_{\lambda_3-2\lambda_2}$    \\\hline
$E_n/P(\alpha_3)$ & $E_{2\lambda_1}$& $E_{\lambda_4-2\lambda_3}$    \\\hline
$E_n/P(\alpha_4)$ & $ E_{\lambda_1+\lambda_2+\lambda_3}$ &$E_{\lambda_5-2\lambda_4}$    \\\hline
$E_n/P(\alpha_5)$ & $E_{ \lambda_1+\lambda_4}$ & $E_{\lambda_6-2\lambda_5}$     \\\hline
$E_n/P(\alpha_6)$~$(n\neq 6)$ &  $ E_{\lambda_1+\lambda_5}$ & $E_{\lambda_7-2\lambda_6}$    \\\hline
$E_8/P(\alpha_7)$ & $ E_{\lambda_1+\lambda_6}$ & $E_{\lambda_8-2\lambda_7}$   \\\hline
$F_4/P(\alpha_1)$ & $ E_{3\lambda_4}$ & $ E_{\lambda_4-2\lambda_1}$  \\\hline
$F_4/P(\alpha_2)$ &  $ E_{2\lambda_1}$ & $ E_{2\lambda_3-2\lambda_2}$   \\\hline
$F_4/P(\alpha_3)$ & $ E_{\lambda_1+\lambda_2}$ & $ E_{\lambda_4-2\lambda_3}$  \\\hline
\end{tabular}
                   \caption{Choices of $F_1$ and $F_2$}
                   \label{table1}
               \end{table}
\end{center}
\end{cor}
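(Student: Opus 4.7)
The plan is to verify the ACM condition for each row of Table \ref{table1} via Theorem \ref{Thm}. Since the ACM property is invariant under tensoring by a line bundle, we first reduce to the initialized case. Each $F_1$-entry already has vanishing $\lambda_k$-coefficient, hence is already initialized. Each $F_2$-entry has $\lambda_k$-coefficient equal to $-2$; tensoring with $\mathcal{O}(2)=L_{\lambda_k}^{2}$ therefore produces an initialized bundle. Explicitly, $E_{2\lambda_7-2\lambda_1}\otimes\mathcal{O}(2)=E_{2\lambda_7}$ on $E_7/P(\alpha_1)$, and $E_{\lambda_{k+1}-2\lambda_k}\otimes\mathcal{O}(2)=E_{\lambda_{k+1}}$ in each $E_n$-row of "fundamental" shape; analogous twists handle the $F_4$-rows.

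By Theorem \ref{Thm} it then suffices, for each of these initialized bundles, to show that the associated datum $T_{\lambda,k}^{G}$ contains every integer in $[1,M_{k,\lambda}^{G}]$. The datum is computed directly from Remark \ref{T}: for each positive root $\alpha=\sum_i m_i\alpha_i$ of $G$ with $m_k\neq 0$, one substitutes the components $m_i$ together with the coefficients $a_i$ of $\lambda$ into the stated ratio. The positive-root data is supplied explicitly by Lemma \ref{posroot}, so the computation is finite and purely arithmetic. Having obtained the list $T_{\lambda,k}^{G}$ one reads off $M_{k,\lambda}^{G}$ as its maximum and checks row by row that the integer interval $[1,M_{k,\lambda}^{G}]$ is covered.

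The main obstacle is the sheer size of the enumeration: since $|\Phi_{k,G}^{+}|=\dim G/P(\alpha_k)$ reaches $57$ for $E_8/P(\alpha_7)$, the lists $T_{\lambda,k}^{G}$ can be long. However, the weights selected in Table \ref{table1} are chosen with small coefficients, so $M_{k,\lambda}^{G}$ stays modest and the coverage condition is in fact easy to verify — by hand for $E_6$ and $F_4$, as was illustrated in Example \ref{classG} for the $G_2$-type analogue, and by a short computer enumeration for the $E_7$ and $E_8$ cases. No additional conceptual ingredient beyond Theorem \ref{Thm} and the explicit root tables is required; the corollary is an entirely finite verification carried out uniformly for each entry of Table \ref{table1}.
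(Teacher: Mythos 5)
Your proposal is correct and follows essentially the same route as the paper: the corollary is established exactly by reducing to the initialized twists and running the finite arithmetic check of Theorem \ref{Thm} using Remark \ref{T} and the root lists of Lemma \ref{posroot}, just as in Example \ref{classG} and the computer-aided classification mentioned before the corollary. No further comment is needed.
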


 \section{Representation type of exceptional Grassmannians}

In this section, we present certain applications for the main theorem. First, let's recall that a subscheme $X\subset\mathbb{P}^N$ is ACM if its homogeneous coordinate ring $R_X$ is a local Cohen-Macauley ring (See \cite{migliore1998introduction} Definition 1.2.2). We also note that exceptional Grassmannians are ACM schemes (\cite{ramanathan1985schubert}). Mimicking an analogus trichotomy in Representation Theory, a classification of ACM schemes was proposed as finite, tame or wild according to the complexity of their associated category of ACM bundles. Let us recall the definitions:

 \begin{define}
 Let $X\subset\mathbb{P}^n$ be an nonsingular ACM subscheme of dimension $d$. $X$ is said to be of \emph{finite representation type} if it has, up to twist and isomorphism, only a finite number of indecomposable ACM bundles. It is said to be of \emph{tame representation type} if either it has, up to twist and isomorphism, an infinite discrete set of indecomposable ACM bundles or, for each rank $r$, the indecomposable ACM bundles of rank $r$ form a finite number of families of dimension at most $d.$ Finally, $X$ is said to be of \emph{wild representation type} if there exist $l$-dimensional families of non-isomorphic indecomposable ACM bundles for arbitrary large $l.$
 \end{define}

Varieties of finite representation type have been completely classified and they fall into a short list: three or less reduced points on $\mathbb{P}^2$, a rational normal curve, a projective space, a non-singular quadric hypersurface, a cubic scroll in $\mathbb{P}^4$ and the Veronese surface in $\mathbb{P}^5$ (\cite{buchweitz1987cohen}, \cite{eisenbud1988classification}).
Recently, Joan Pons-Llopis and Daniele Faenzi showed that most ACM varieties are of wild representation type (\cite{pons2021cohen}). However, we can also use our classification theorem to construct $l$-dimensional families of non-isomorphic indecomposable ACM bundles for arbitrary large $l$ on some exceptional Grassmannians, which means they are of wild representation type.

In this section, we suppose that $X$ is one of the following exceptional Grassmannians: \[E_n/P(\alpha_k)~,F_4/(P(\alpha_l)),\]
 where $(n,k)\neq (6,1),(7,7),(8,1),(8,8)$ and $l\neq 4$ (i.e., $X$ is the variety listed in Table \ref{table1}).

 We need the following lemma which can be found in \cite{snow1989homogeneous}.
 \begin{lem}\label{canonical}
 The canonical bundle on $X=G/P(\alpha_k)$
 is $\mathcal{O}_X(m)$. Here we list their relations as follows.
 \begin{enumerate}
     \item  For $E_6$,
\begin{center}
         \begin{tabular}{|c|c|c|c|c|c|c|}
  \hline
k & 1&2 &3&4&5&6 \\
\hline
m & -12&-11 &-9&-7&-9&-12  \\\hline
\end{tabular}
\end{center}

     \item  For $E_7$,

\begin{center}
         \begin{tabular}{|c|c|c|c|c|c|c|c|}
  \hline
k & 1&2 &3&4&5&6&7 \\
\hline
m & -17&-14 &-11&-8&-10&-13&-18  \\\hline
\end{tabular}
\end{center}
     \item  For $E_8$,

     \begin{center}
         \begin{tabular}{|c|c|c|c|c|c|c|c|c|}
  \hline
k & 1&2 &3&4&5&6&7&8 \\
\hline
m & -23&-17 &-13&-9&-11&-14&-19&-29  \\\hline
\end{tabular}
\end{center}
     \item  For $F_4$,

\begin{center}
         \begin{tabular}{|c|c|c|c| c|}
  \hline
k & 1&2 &3&4\\
\hline
m & -8&-5 &-7&-11   \\\hline
\end{tabular}
\end{center}

 \end{enumerate}

 \end{lem}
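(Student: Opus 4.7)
My plan is to combine the standard formula for the canonical bundle of a rational homogeneous variety with the explicit lists of positive roots already collected in Lemma \ref{posroot}. Recall that for $X=G/P(\alpha_k)$ the tangent space at $eP(\alpha_k)$ is, as an $H$-module, $\mathfrak{g}/\mathfrak{p}(\alpha_k)\cong\bigoplus_{\alpha\in\Phi^+_{k,G}}\mathfrak{g}_{-\alpha}$. Hence the fiber of $K_X=\det T_X^{*}$ at $eP(\alpha_k)$ carries the character $\sigma:=\sum_{\alpha\in\Phi^+_{k,G}}\alpha$ of $P(\alpha_k)$, so that in the paper's notation $K_X=E_{-\sigma}$.

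Next I would verify that $\sigma$ is an integral multiple of $\lambda_k$, which is what allows $K_X$ to actually be of the form $\mathcal{O}_X(m)$. The Weyl group $W_L$ of the Levi factor of $P(\alpha_k)$ is generated by the simple reflections $s_i$ with $i\neq k$; each such $s_i$ preserves the $\alpha_k$-coefficient of any root (since $s_i$ adds a multiple of $\alpha_i$) and permutes the positive roots different from $\alpha_i$, hence permutes $\Phi^+_{k,G}$. Consequently $\sigma$ is $W_L$-invariant, which on the weight lattice is equivalent to $(\sigma,\alpha_i)=0$ for every $i\neq k$, forcing $\sigma=c\,\lambda_k$ for a unique positive integer $c$. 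Therefore $K_X=\mathcal{O}_X(-c)$ and the claim reduces to computing $c$ case by case.

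For this final step I would read the positive roots of $\Phi^+_{k,G}$ off from Lemma \ref{posroot}, sum them coordinate-wise to obtain $\sigma=\sum_j s_j\alpha_j$, and compare with the expansion $\lambda_k=\sum_j (A^{-1})_{kj}\alpha_j$ of the fundamental weight in terms of simple roots, where $A$ is the Cartan matrix of $G$ read off from the Dynkin diagrams in Section~2. The ratio $s_j/(A^{-1})_{kj}$ is then independent of $j$ and equals $c$, producing the tabulated value $m=-c$. A one-coordinate shortcut that avoids inverting $A$ is
\[
c \;=\; \frac{2(\sigma,\alpha_k)}{(\alpha_k,\alpha_k)} \;=\; \sum_{\alpha\in\Phi^+_{k,G}} \frac{2(\alpha,\alpha_k)}{(\alpha_k,\alpha_k)},
\]
whose right-hand side is directly computable from Lemma \ref{posroot} and the Killing-form normalizations used in Remark \ref{T}.

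The only genuine difficulty is combinatorial bookkeeping: for $E_8/P(\alpha_k)$ the set $\Phi^+_{k,G}$ can contain several dozen roots, so the arithmetic is lengthy; however, Lemma \ref{posroot} furnishes all of the relevant roots explicitly, reducing the problem to a finite, systematic calculation that can also be cross-checked by computer. As a natural sanity check one verifies $|\Phi^+_{k,G}|=\dim X$ and that the resulting $c$ matches the Fano index of $X$ recorded in the classical literature, for example $c=12$ for the Cayley plane $E_6/P(\alpha_1)$, in agreement with the table.
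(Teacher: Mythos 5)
Your argument is correct, but it is not the route the paper takes: the paper gives no proof of Lemma \ref{canonical} at all, simply quoting the values from Snow \cite{snow1989homogeneous}. Your proposal replaces that citation with a self-contained derivation: $K_X$ is the homogeneous line bundle attached to the character $\sigma=\sum_{\alpha\in\Phi^+_{k,G}}\alpha$ of $P(\alpha_k)$ (the determinant of $(\mathfrak{g}/\mathfrak{p})^{*}$), your $W_L$-invariance argument correctly forces $\sigma=c\,\lambda_k$ with $c>0$, and $c=\sum_{\alpha\in\Phi^+_{k,G}}\frac{2(\alpha,\alpha_k)}{(\alpha_k,\alpha_k)}$ is precisely the Fano index, so $m=-c$; the tabulated numbers can then be read off from Lemma \ref{posroot} (for instance, for $F_4/P(\alpha_4)$ the fifteen positive roots with $m_4\neq0$ sum to $11\alpha_1+22\alpha_2+33\alpha_3+22\alpha_4=11\lambda_4$, giving $m=-11$ as in the table). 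What the paper's choice buys is brevity; what yours buys is independence from the literature and a computation that reuses data already displayed in the paper, at the cost of lengthy but finite arithmetic (especially for $E_8$), which is reasonable to delegate to a computer exactly as the paper does elsewhere. Two small points of care: whether one writes $K_X=E_{\sigma}$ or $E_{-\sigma}$ depends on the convention relating characters of $P(\alpha_k)$ to line bundles (you correctly say the fiber of $K_X$ carries the character $\sigma$, which in the paper's highest-weight notation would more naturally be written $E_{\sigma}$); this sign bookkeeping does not affect the conclusion $K_X\cong\mathcal{O}_X(-c)$, since anti-ampleness of $K_X$ pins down the sign, but the convention should be stated explicitly. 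Likewise, in the expansion of $\lambda_k$ in simple roots one must watch the transpose of the (non-symmetric) Cartan matrix for $F_4$; your coroot shortcut $c=2(\sigma,\alpha_k)/(\alpha_k,\alpha_k)$ avoids that issue entirely and is the cleanest way to finish.
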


 \begin{prop}\label{F1F2}
   With the choice of $X$, $F_1$ and $F_2$ as in Table \ref{table1}, then we have
  \begin{enumerate}
      \item $F_1$ and $F_2$ are simple ACM bundles on $X.$
      \item $h^1(X,F_1\otimes F_2)\geq 4$ and $h^i(X,F_1\otimes F_2)=0$ for any $i\neq 1.$
      \item $h^i(X,F_1^\vee\otimes F_2^\vee)=0$ for any $i\in \mathbb{Z}$.
  \end{enumerate}
 \end{prop}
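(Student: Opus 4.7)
The plan is to handle the three parts separately, leveraging the Borel--Bott--Weil machinery developed earlier.

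For part (1), the ACM property is immediate: each $F_i$ appears in the table of Corollary \ref{Class2}, where we have already verified (via Theorem \ref{Thm}) that the associated datum contains every integer in $[1,M^{G}_{\lambda,k}]$. Simplicity is an abstract consequence of irreducibility: by construction $F_i = G\times_{P(\alpha_k)} V_i$ where $V_i$ is an irreducible representation of $P(\alpha_k)$, and Schur's lemma gives $\mathrm{End}(F_i) = \mathrm{Hom}_{P(\alpha_k)}(V_i,V_i)=\mathbb{C}$, so $F_i$ is simple.

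For parts (2) and (3), the strategy is to compute the tensor products $F_1\otimes F_2$ and $F_1^{\vee}\otimes F_2^{\vee}$ explicitly as direct sums of irreducible homogeneous bundles, then invoke Theorem \ref{borel bott weil} summand by summand. Writing $F_i$ as an irreducible representation of the semisimple part of the Levi tensored with an appropriate line bundle, the decomposition of $F_1\otimes F_2$ reduces to a Littlewood--Richardson-type decomposition for the Levi subgroup (a product of classical simple factors in every case appearing in Table \ref{table1}), followed by twisting by the fixed line bundle contribution $\lambda_{k+1}-2\lambda_k$ (or its analogue). For each summand $E_\mu$, one then checks whether $\mu+\rho$ is singular or, if regular, of which index, applying the Killing-form calculation of Remark \ref{T}.

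To prove (2), I would verify for each of the ten entries in Table \ref{table1} that every irreducible summand $E_\mu$ of $F_1\otimes F_2$ satisfies one of: (i) $\mu+\rho$ is singular, contributing nothing, or (ii) $\mu+\rho$ is regular of index exactly $1$, contributing to $H^1$; and that the sum of contributions to $H^1$ is at least $4$. In practice the summand giving the lower bound $4$ will typically be a distinguished component (for example a highest-weight summand whose shifted weight $\mu+\rho$ sits in the Weyl chamber next to $\alpha_k$) whose Weyl-reflected weight $w(\mu+\rho)-\rho$ has an associated $G$-representation of dimension $\geq 4$, and the remaining summands will fall into the singular case by direct inspection of the pairings against roots in $\Phi^{+}_{k,G}$. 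For (3), the dual bundle $F_1^{\vee}\otimes F_2^{\vee}$ decomposes analogously (using that the dual of an irreducible $P$-representation has highest weight $-w_0^{L}(\mu)$ for the longest Weyl element $w_0^{L}$ of the Levi), and one checks that every summand has $\nu+\rho$ singular, so all cohomology vanishes by part (1) of Theorem \ref{borel bott weil}. Alternatively, one can reduce (3) to a statement about $F_1\otimes F_2$ twisted by the canonical bundle via Serre duality, using the explicit values of $m$ from Lemma \ref{canonical}.

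The main obstacle is the volume of case-by-case verification: each of the ten rows of Table \ref{table1} requires an explicit tensor decomposition over an exceptional Levi (of type $D_6$, $A_1\times A_4$, $A_2\times A_1\times A_2$, and so on), followed by a regularity check of Borel--Bott--Weil for every summand. Each step is routine but delicate because the root systems for $E_n$ and $F_4$ are large and the pairings against $\lambda_k$ must be tracked carefully; the lower bound $h^1\geq 4$ in particular must be extracted from a specific summand rather than proved abstractly.
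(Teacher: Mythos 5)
Your strategy coincides with the paper's: ACM-ness of $F_1,F_2$ from Corollary \ref{Class2}, simplicity from irreducibility, decomposition of $F_1\otimes F_2$ into irreducible homogeneous summands followed by Borel--Bott--Weil summand by summand, and Serre duality together with Lemma \ref{canonical} for part (3). (One small caveat on part (1): the identification $\mathrm{End}(F_i)\cong\mathrm{Hom}_{P(\alpha_k)}(V_i,V_i)$ is itself a theorem about homogeneous bundles --- the paper cites Ottaviani, Theorem 12.3 --- not a bare application of Schur's lemma, since an arbitrary endomorphism of $F_i$ is not a priori $G$-equivariant.)

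The genuine gap is that the proposal stops exactly where the proof begins: parts (2) and (3) \emph{are} the case-by-case verifications you defer, and the bound $h^1\geq 4$ is only asserted to hold ``typically'' for a distinguished summand. The paper settles this by a dichotomy you do not articulate. When deleting the node $k$ disconnects the Dynkin diagram, $F_1$ and $F_2$ come from different simple factors of the Levi, so $F_1\otimes F_2=E_{u_1+u_2}$ is already irreducible; one checks $u_1+u_2+\rho$ is regular of index $1$, that $s_{\alpha_k}(u_1+u_2+\rho)=\lambda_1+\rho$, and hence $h^1=\dim G_{\lambda_1}\geq 4$ by Weyl's dimension formula. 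When the diagram stays connected ($E_7/P(\alpha_1)$, $E_n/P(\alpha_2)$, $F_4/P(\alpha_1)$), the paper computes the full decomposition by Klimyk's formula (via Magma) and then verifies, summand by summand with explicit roots (Tables \ref{table3} and \ref{table7}), that each shifted weight is singular or regular of index exactly $1$, and that every shifted weight of $F_1\otimes F_2\otimes\mathcal{O}_X(K_X)$ is singular. Without these explicit decompositions and pairings, nothing in your argument rules out a summand whose shifted weight is regular of index different from $1$ (which would destroy the vanishing $h^i=0$ for $i\neq1$), nor certifies the inequality $h^1\geq 4$; so as written the proposal reproduces the framework of Theorem \ref{borel bott weil} but not the content of the proposition.
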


 \begin{proof}
 \begin{enumerate}
     \item  Since $F_1$ and $F_2$ are irreducible homogenenous vector bundles, then first statement follows from Corollary \ref{Class2} and Theorem 12.3 in \cite{ottaviani1995rational}.
     \item Since $F_1$ and $F_2$ are irreducible $P(\alpha_k)$-modules, then, by Proposition 10.5 in \cite{ottaviani1995rational}, $F_1\otimes F_2$ is completely reducible and can be decomposed into the direct sum of irreducible $P(\alpha_k)$-modules. 

 If $D(G)-\{k\}$ is a disconnected Dynkin diagram, then in terms of the choice in Table \ref{table1}, we find $F_1 =E_{u_1}$ and $F_2=E_{u_2}$
 where \[u_1=\lambda_1+\sum\limits_{j<k,j \text{ adjacent to }k}\lambda_j,\]
 and
 \[u_2=\left\{\begin{matrix}
 2\lambda_{k+1}-2\lambda_k,&\text{if}~X=F_4/P(\alpha_2),\\
 \lambda_{k+1}-2\lambda_k,&otherwise.
 \end{matrix}\right.\]
In this case, $F_1$ and $F_2$ comes from different simple parts of the semisimple part. Hence $F_1\otimes F_2=E_{u_1+u_2}$.
Because there is only one poistive root $\alpha_k$ such that $(u_1+u_2+\rho,\alpha_k)<0$ and  $(u_1+u_2+\rho,\alpha)>0$ for any positive root $\alpha\ne \alpha_k$,
  $u_1+u_2+\rho$ is regular of index 1. By the Borel--Bott--Weil Theorem,
 \[h^1(X,E_{u_1+u_2})=\dim G_{s_{\alpha_k}(u_1+u_2+\rho)-\rho}.\]
 Note that here $s_{\alpha_k}(u_1+u_2+\rho)=u_1+u_2+\rho-\frac{2(u_1+u_2+\rho,\alpha_k)}{(\alpha_k,\alpha_k)}\alpha_k=\lambda_1+\rho.$
 Then, by Weyl's formula (see \cite{ottaviani1995rational} Remark 10.18), it is not hard to get that $h^1(X,E_{u_1+u_2})\geq 4.$

 If $D(G)-\{k\}$ is a connected Dynkin diagram, then $F_1$ and $F_2$, as we choose in Table \ref{table1}, comes from the same simple part. Hence we use Magma which is based on the Klymik's formula (See \cite{de2000lie} Proposition 8.12.3) to get the decomposition of $F_1\otimes F_2$ as follows.

  \[F_1\otimes F_2=\left\{\begin{matrix}
 E_{-2\lambda_1+2\lambda_2+2\lambda_7}\oplus E_{-2\lambda_1+2\lambda_3}\oplus E_{-2\lambda_1+\lambda_2+\lambda_3+\lambda_7},&\text{if}~X=E_7/P(\alpha_1),\\
 E_{3\lambda_1-2\lambda_2+\lambda_3}\oplus
E_{2\lambda_1-2\lambda_2+\lambda_4},&\text{if}~X=E_n/P(\alpha_2),\\
  E_{-2\lambda_1+\lambda_3+3\lambda_4}\oplus E_{-2\lambda_1+\lambda_2+2\lambda_4}\oplus E_{-\lambda_1+3\lambda_4}\oplus E_{-\lambda_1+\lambda_3+\lambda_4},&\text{if}~X=F_4/P(\alpha_1).\\
 \end{matrix}\right.\]
Then $H^i(X,F_1\otimes F_2)=\bigoplus_{l} H^i(X,E_{\mu_l}).$

Now it suffices to show that either $\mu_l+\rho$ is regular of index 1 or $\mu_l+\rho$ is singular. We show this by find specific positive roots as Table \ref{table3}.
 \begin{center}
\makeatletter\def\@captype{table}\makeatother
    \centering

                 \begin{tabular}{|c|c|c|c|}
  \hline
$X$ &Weight $\mu_l$ of the tensor decomposition&$\alpha$& $(\mu_l+\rho,\alpha)$  \\
\hline
$E_7/P(\alpha_1)$ & $ -2\lambda_1+2\lambda_2+2\lambda_7 $  &  $\alpha_1+\alpha_3$ & 0 \\
  & $-2\lambda_1+2\lambda_3$  &  $\alpha_1$ & -1 \\
  & $ -2\lambda_1+\lambda_2+\lambda_3+\lambda_7 $  &  $\alpha_1$ &-1  \\ \hline

$E_n/P(\alpha_2)$ & $ 3\lambda_1-2\lambda_2+\lambda_3 $   &$\alpha_2+\alpha_4$&0 \\

  & $ 2\lambda_1-2\lambda_2+\lambda_4 $   &$\alpha_2$ &-1\\\hline

$F_4/P(\alpha_1)$ & $ -2\lambda_1+\lambda_3+3\lambda_4 $ & $ \alpha_1+\alpha_2$ &0  \\
  & $ -2\lambda_1+\lambda_2+2\lambda_4 $ & $ \alpha_1$  &-1 \\
  & $ -\lambda_1+3\lambda_4 $ & $\alpha_1$  &0 \\
  & $ -\lambda_1+\lambda_3+\lambda_4 $ & $ \alpha_1$  &0 \\\hline

\end{tabular}
    \caption{Killing forms with suitable poisitive roots}
    \label{table3}
    \end{center}
Then second statement follows from the computation and the Borel--Bott--Weil Theorem as before.

 \item Serre duality tells us that
 \[H^i(X,F_1^{\vee}\otimes F_2^{\vee})\simeq H^{\dim X-i}(X,F_1\otimes F_2\otimes \mathcal{O}_X(K_X))^*.\]
Hence it suffices to show that $h^i(X,F_1\otimes F_2\otimes \mathcal{O}_X(K_X))=0$ for any $i\in \mathbb{Z}.$

It is easy to see that \[F_1\otimes F_2\otimes \mathcal{O}_X(m)=\bigoplus_lE_{\mu_l+m\lambda_k},\]
where $\mathcal{O}_X(K_X)=\mathcal{O}_X(m)$ follows from Lemma \ref{canonical}.

It suffices to show that $\mu_l+m\lambda_k+\rho$ is singular by the Borel--Bott--Weil Theorem. This follows from Table \ref{table7}.
\\

 \end{enumerate}

      \begin{footnotesize}

\makeatletter\def\@captype{table}\makeatother
    \centering
 \begin{tabular}{|c|c|c| }

  \hline
$X$ &$\mu_l+m\lambda_k$ &$\alpha$ satisfying $(\mu_l+m\lambda_k+\rho,\alpha)=0$    \\
\hline

$E_n/P(\alpha_3)$ & $ 2\lambda_1-(2n-1)\lambda_3+\lambda_4 $&$\sum_{i=1}^n\alpha_i+\sum_{j=4}^{n-2}\alpha_j$    \\\hline

$E_n/P(\alpha_4)$ & $ \lambda_1+\lambda_2+\lambda_3+\lambda_5-(n+3)\lambda_4 $ &$\sum_{i=1}^{n-1}\alpha_i$  \\\hline

$E_n/P(\alpha_5)$& $  \lambda_1 +\lambda_4-(n+5)\lambda_5+\lambda_6 $ & $\sum_{i=1}^n\alpha_i+\alpha_4$     \\\hline

$E_n/P(\alpha_6)~(n\neq 6)$ &  $  \lambda_1 +\lambda_5-(n+8)\lambda_6+\lambda_7 $ & $\sum_{i=1}^n\alpha_i+\alpha_3+2\alpha_4+\alpha_5$        \\\hline
$E_8/P(\alpha_7)$ & $  \lambda_1 +\lambda_6-21\lambda_7+\lambda_8 $ & $\alpha_1+\alpha_2+\alpha_3+2\alpha_4+\alpha_5+\alpha_6+\alpha_7+\alpha_8$
\\\hline
$F_4/P(\alpha_2)$  & $ 2\lambda_1-7\lambda_2+2\lambda_3  $& $\alpha_1+ \alpha_2+2\alpha_3 $ \\\hline
$F_4/P(\alpha_3)$ & $\lambda_1+\lambda_2-9\lambda_3+\lambda_4$ & $\alpha_1+\alpha_2+\alpha_3$    \\\hline
$E_7/P(\alpha_1)$ & $ -19\lambda_1+2\lambda_2+2\lambda_7 $  &  $\alpha_1+2\alpha_2+2\alpha_3+3\alpha_4+2\alpha_5+2\alpha_6+\alpha_7$ \\
 & $-19\lambda_1+2\lambda_3$  &  $\alpha_1+2\alpha_2+2\alpha_3+4\alpha_4+3\alpha_5+2\alpha_6+\alpha_7$ \\
  & $ -19\lambda_1+\lambda_2+\lambda_3+\lambda_7 $  & $\alpha_1+2\alpha_2+2\alpha_3+3\alpha_4+3\alpha_5+2\alpha_6+\alpha_7$  \\ \hline

$E_6/P(\alpha_2)$ & $ 3\lambda_1-13\lambda_2+\lambda_3 $   &$\alpha_1+ \alpha_2+ 2\alpha_3+2\alpha_4+ \alpha_5+ \alpha_6$  \\
  & $2 \lambda_1-13\lambda_2+\lambda_4 $   &$\alpha_1+ \alpha_2+ 2\alpha_3+2\alpha_4+ 2\alpha_5+ \alpha_6 $ \\\hline
$E_7/P(\alpha_2)$ & $ 3\lambda_1-16\lambda_2+\lambda_3 $   &$\alpha_1+ \alpha_2+ 2\alpha_3+2\alpha_4+2\alpha_5+2\alpha_6 +\alpha_7$ \\
  & $ 2\lambda_1-16\lambda_2+\lambda_4 $   &$\alpha_1+ \alpha_2+ 2\alpha_3+2\alpha_4+2\alpha_5+2\alpha_6 +\alpha_7$  \\\hline
$E_8/P(\alpha_2)$ & $ 3\lambda_1-19\lambda_2+\lambda_3 $   &$\alpha_1+ \alpha_2+ 2\alpha_3+3\alpha_4+ 2\alpha_5+ 2\alpha_6 +2\alpha_7+\alpha_8$\\
   & $ 2\lambda_1-19\lambda_2+\lambda_4 $   &$\alpha_1+ \alpha_2+ 2\alpha_3+3\alpha_4+ 2\alpha_5+ 2\alpha_6 +2\alpha_7+\alpha_8$ \\\hline

$F_4/P(\alpha_1)$ & $ -10\lambda_1+\lambda_3+3\lambda_4 $ & $ \alpha_1+2\alpha_2+3\alpha_3+2\alpha_4$  \\
  & $ -10\lambda_1+\lambda_2+2\lambda_4 $ & $ \alpha_1+2\alpha_2+4\alpha_3+2\alpha_4$   \\
  & $ -9\lambda_1+3\lambda_4 $ & $ \alpha_1+2\alpha_2+4\alpha_3+2\alpha_4$   \\
  & $ -9\lambda_1+\lambda_3+\lambda_4 $ & $\alpha_1+2\alpha_2+4\alpha_3+2\alpha_4$  \\\hline

\end{tabular}
    \caption{Killing form with $\alpha$ such that $\mu_l+m\lambda_k+\rho$ singular}
    \label{table7}
  \end{footnotesize}

 \end{proof}

 In order to prove the representation type of $X$, we need introduce the definition of weakly equivalence.

\begin{define}
   Given two extensions $\mathcal{E},\mathcal{E}'\in Ext^1(\mathcal{G},\mathcal{F})$ we say they are weakly equivalent denoted by $\sim_w$ if there exist isomorphisms $\psi,\phi,\varphi$ such that the following diagram commutes

   \centerline{
   \xymatrix{
   0\ar[r]& \mathcal{F}\ar[r]\ar[d]^{\psi}&\mathcal{E}\ar[d]^\phi\ar[r]& \mathcal{G}\ar[d]^\varphi\ar[r]&0\\
      0\ar[r]& \mathcal{F}\ar[r]&\mathcal{E}'\ar[r] &\mathcal{G}\ar[r]&0.
   }
   }

\end{define}
The main tool of the proof is Proposition 5.1.3 in \cite{pons2010acm}. Let us review this proposition.

\begin{prop}\label{weakeq}
   Let $X$ be a projective variety over k and $\mathcal{F}_1,\dots,\mathcal{F}_{r+1}$, with $r\geq 1$, be simple coherent sheaves on $X$ (i.e., $\textup{Hom}(\mathcal{F}_i,\mathcal{F}_i)=k$) on X such that $\textup{Hom}(\mathcal{F}_i,\mathcal{F}_j)=0~(i\neq j).$ Denote
   \[U=\textup{Ext}^1(\mathcal{F}_{r+1},\mathcal{F}_1)-\{0\}\times\dots\times \textup{Ext}^1(\mathcal{F}_{r+1},\mathcal{F}_r)-\{0\}.\]

       Then a sheaf $E$ that comes up from an extension of $\mathcal{F}_{r+1}$ by $\bigoplus\mathcal{F}_i$ is simple of and only if $[\mathcal{E}]\in U$
        and given two extensions $[\mathcal{E}],[\mathcal{E}']\in U$ we have that \[\textup{Hom}(\mathcal{E},\mathcal{E}')\neq 0\Longleftrightarrow [\mathcal{E}]\sim_w[\mathcal{E}'].\]

   To be more precise, the simple coherent sheaves $\mathcal{E}$ coming up from an extension of $\mathcal{F}_{r+1}$ by $\bigoplus\mathcal{F}_i$ \[0\to \bigoplus^{r}_{i=1}\mathcal{F}_i\to \mathcal{E}\to \mathcal{F}_{r+1}\to 0\] are parametrized, up to isomorphism, by \[(U/\sim_w)\simeq \mathbb{P}(\textup{Ext}^1(\mathcal{F}_{r+1},\mathcal{F}_1))\times\dots\times\mathbb{P}(\textup{Ext}^1(\mathcal{F}_{r+1},\mathcal{F}_r)).\]
\end{prop}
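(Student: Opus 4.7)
The proof naturally decomposes into three parts matching the three assertions, and throughout one exploits the splitting
\[
\textup{Ext}^{1}\bigl(\mathcal{F}_{r+1},\bigoplus_{i=1}^{r}\mathcal{F}_{i}\bigr)\cong \bigoplus_{i=1}^{r}\textup{Ext}^{1}(\mathcal{F}_{r+1},\mathcal{F}_{i})
\]
to write the extension class of $\mathcal{E}$ as a tuple $(e_{1},\dots,e_{r})$. My plan is first to translate the simplicity of $\mathcal{E}$ into the non-vanishing of every coordinate $e_{i}$, then to analyze $\textup{Hom}(\mathcal{E},\mathcal{E}')$ as a quotient of the extension data, and finally to read off the moduli description from the scaling action of the ambient automorphisms.

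For the simplicity claim I would argue as follows. If some $e_{i}=0$, then the $i$-th summand of the extension splits off, so $\mathcal{F}_{i}$ is a direct summand of $\mathcal{E}$ and the corresponding projection is a non-scalar endomorphism; hence $\mathcal{E}$ is not simple. Conversely, assume every $e_{i}\neq 0$. Applying $\textup{Hom}(-,\mathcal{F}_{j})$ to the defining sequence and using the orthogonality hypotheses, one sees that the connecting map $k=\textup{Hom}(\mathcal{F}_{j},\mathcal{F}_{j})\to \textup{Ext}^{1}(\mathcal{F}_{r+1},\mathcal{F}_{j})$ is $1\mapsto e_{j}$, which is injective. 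Consequently $\textup{Hom}(\mathcal{E},\mathcal{F}_{j})=0$ for $j\le r$, while the analogous sequence for $j=r+1$ yields $\textup{Hom}(\mathcal{E},\mathcal{F}_{r+1})=k$. Feeding this into the long exact sequence obtained by applying $\textup{Hom}(\mathcal{E},-)$ to the extension gives $\textup{End}(\mathcal{E})\hookrightarrow k$, and the identity shows this injection is an isomorphism.

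For the characterization of $\textup{Hom}(\mathcal{E},\mathcal{E}')$, let $\phi\colon \mathcal{E}\to\mathcal{E}'$ be non-zero. The same computation shows $\textup{Hom}(\mathcal{E},\mathcal{E}')\hookrightarrow \textup{Hom}(\mathcal{E},\mathcal{F}_{r+1})=k$, so $\phi$ descends to a non-zero scalar $\varphi\in k^{\times}$ at the quotient level; it therefore restricts to a map $\psi\colon \bigoplus\mathcal{F}_{i}\to\bigoplus\mathcal{F}_{i}$, and by orthogonality $\psi$ is diagonal with components $\psi_{i}\in k$. Functoriality of $\textup{Ext}^{1}$ applied to the commutative ladder gives the identity $\psi_{i}\cdot e_{i}=\varphi\cdot e_{i}'$ in $\textup{Ext}^{1}(\mathcal{F}_{r+1},\mathcal{F}_{i})$. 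Since $\varphi\neq 0$ and $e_{i},e_{i}'\neq 0$, each $\psi_{i}$ must be non-zero, so $\psi$ and $\varphi$ are isomorphisms and $\phi$ upgrades to a weak equivalence. The converse is immediate.

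The moduli description now follows: weak equivalence acts on $(e_{1},\dots,e_{r})$ by $e_{i}\mapsto (\psi_{i}/\varphi)\,e_{i}$ with $\psi_{i}/\varphi\in k^{\times}$ chosen independently for each $i$, and the quotient of $U$ by this free action of $(k^{\times})^{r}$ is exactly $\prod_{i=1}^{r}\mathbb{P}(\textup{Ext}^{1}(\mathcal{F}_{r+1},\mathcal{F}_{i}))$. The main technical step is the simultaneous diagram chase in two long exact sequences used to pin down $\textup{Hom}(\mathcal{E},\mathcal{E}')$; the cleanest presentation is via the filtration spectral sequence associated with $\bigoplus\mathcal{F}_{i}\subset \mathcal{E}$, whose $E_{2}$-page collapses immediately under the orthogonality hypotheses, after which the three assertions drop out in sequence.
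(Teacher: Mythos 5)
Your proof is correct. Note, however, that the paper itself does not prove this proposition at all: it is quoted verbatim as Proposition 5.1.3 of the cited reference (Pons-Llopis' work) and used as a black box, so there is no in-paper argument to compare against. Your reconstruction is the standard one and it is complete: (i) the splitting $\textup{Ext}^1(\mathcal{F}_{r+1},\bigoplus_i\mathcal{F}_i)\cong\bigoplus_i\textup{Ext}^1(\mathcal{F}_{r+1},\mathcal{F}_i)$, with the observation that $e_i=0$ forces $\mathcal{F}_i$ to split off, and conversely the connecting map $\textup{Hom}(\mathcal{F}_j,\mathcal{F}_j)\to\textup{Ext}^1(\mathcal{F}_{r+1},\mathcal{F}_j)$, $1\mapsto e_j$, kills $\textup{Hom}(\mathcal{E},\mathcal{F}_j)$ for $j\le r$ and gives $\textup{End}(\mathcal{E})\hookrightarrow\textup{Hom}(\mathcal{E},\mathcal{F}_{r+1})=k$; (ii) the ladder argument with $\psi_ie_i=\varphi e_i'$, $\varphi\neq0$, $e_i'\neq0$, forcing every $\psi_i\in k^\times$ and hence upgrading any nonzero morphism to a weak equivalence; (iii) the quotient of $U$ by the resulting coordinate-wise $k^\times$-scalings being $\prod_i\mathbb{P}(\textup{Ext}^1(\mathcal{F}_{r+1},\mathcal{F}_i))$. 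Two cosmetic points: in the last step you should say explicitly that isomorphism classes coincide with weak-equivalence classes because an isomorphism is in particular a nonzero element of $\textup{Hom}(\mathcal{E},\mathcal{E}')$, so assertion (ii) applies; and the closing appeal to a filtration spectral sequence is unnecessary, since the two long exact sequences you already wrote down carry the whole argument.
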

Now we can show the representation type of $X.$

 \begin{cor}\label{reptype}

 The exceptional Grassmannian which is one of the following types: \[E_n/P(\alpha_k)~,F_4/(P(\alpha_l)),\]
 where $(n,k)\neq (6,1),(7,7),(8,1),(8,8)$ and $l\neq 4$ is of wild type.
 \end{cor}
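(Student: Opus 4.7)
My plan is to invoke Proposition \ref{weakeq} with arbitrarily large parameter $r$, using line-bundle twists of $F_1$ as the pairwise distinct simple ACM summands and keeping $F_2^\vee$ fixed as $\mathcal{F}_{r+1}$. Proposition \ref{F1F2}(1) gives $F_1$ and $F_2$ as simple ACM bundles on $X$; since $X$ is smooth (as a homogeneous variety), taking duals and tensoring with $\mathcal{O}_X(a)$ preserve both simplicity and the ACM property, so $F_2^\vee$ and every twist $F_1(a)$ are also simple ACM. The cohomological data of Proposition \ref{F1F2}(2)--(3) already verify the Hom--Ext hypotheses of Proposition \ref{weakeq} in the baseline case $r=1$: the groups $\mathrm{Hom}(F_1, F_2^\vee) = H^0(X, F_1^\vee \otimes F_2^\vee)$ and $\mathrm{Hom}(F_2^\vee, F_1) = H^0(X, F_1 \otimes F_2)$ both vanish, while $\dim \mathrm{Ext}^1(F_2^\vee, F_1) = h^1(X, F_1 \otimes F_2) \geq 4$ yields a $\mathbb{P}^{\geq 3}$-family of simple extensions.

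To push $r$ to infinity I would choose, for each $r$, integers $a_1, \dots, a_r$ with sufficiently large gaps and set $\mathcal{F}_i = F_1(a_i)$, $\mathcal{F}_{r+1} = F_2^\vee$. The required pairwise Hom-vanishings $H^0(X, F_1^\vee \otimes F_1(a_j - a_i)) = 0$, $H^0(X, F_2 \otimes F_1(a_i)) = 0$, and $H^0(X, F_1^\vee \otimes F_2^\vee(-a_i)) = 0$ are obtained by decomposing each tensor product into irreducible homogeneous summands via Klymik's formula and applying the Borel--Bott--Weil Theorem twist by twist, exactly as in the computations leading to Proposition \ref{F1F2}. The key nontrivial input is a lower bound $\dim \mathrm{Ext}^1(F_2^\vee, F_1(a_i)) \geq 2$ for each $i$, and I expect, after inspecting how the singular weights tabulated in the proof of Proposition \ref{F1F2} shift under translation by $a\lambda_k$, that this bound persists on a full arithmetic progression of values $a$, yielding arbitrarily many admissible $a_i$.

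Once the Hom--Ext hypotheses are secured, Proposition \ref{weakeq} parametrizes the simple sheaves $E$ fitting into
\[
0 \longrightarrow \bigoplus_{i=1}^{r} F_1(a_i) \longrightarrow E \longrightarrow F_2^\vee \longrightarrow 0
\]
by the product $\prod_{i=1}^{r} \mathbb{P}\bigl(\mathrm{Ext}^1(F_2^\vee, F_1(a_i))\bigr)$, a variety of dimension at least $r$. Each such $E$ is ACM because an extension of ACM bundles is ACM (long exact sequence in cohomology), and simplicity implies indecomposability. Letting $r$ grow therefore produces families of arbitrarily large dimension of pairwise non-isomorphic indecomposable ACM bundles on $X$, which is exactly the criterion for $X$ to be of wild representation type.

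The principal obstacle is combinatorial: verifying uniformly in $r$ that the three Hom-vanishings and the $\mathrm{Ext}^1$-lower bound can be arranged simultaneously. Via Borel--Bott--Weil this reduces to checking that certain weight shifts by $a\lambda_k$ avoid a bounded singular locus while keeping at least one weight regular of index $1$, a statement that should follow, case by case, from the explicit positive-root tables already compiled for Proposition \ref{F1F2} on each of the exceptional Grassmannians in the list.
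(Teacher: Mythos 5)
Your construction breaks down at two points, and both are fatal. First, the hypothesis of Proposition \ref{weakeq} requires $\textup{Hom}(\mathcal{F}_i,\mathcal{F}_j)=0$ for all $i\neq j$, but for twists of a single bundle this can never hold: if $a_i<a_j$ then $\textup{Hom}(F_1(a_i),F_1(a_j))=H^0(X,F_1^\vee\otimes F_1(a_j-a_i))\supseteq H^0(X,\mathcal{O}_X(a_j-a_i))\neq 0$, since $\mathcal{O}_X$ is a direct summand of $F_1^\vee\otimes F_1$ (the identity endomorphism) and $\mathcal{O}_X(1)$ is very ample. So the family $\{F_1(a_1),\dots,F_1(a_r),F_2^\vee\}$ never satisfies the pairwise Hom-vanishing, no matter how the gaps $a_j-a_i$ are chosen. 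Second, the hoped-for persistence of $\dim\textup{Ext}^1(F_2^\vee,F_1(a))=h^1(X,F_1\otimes F_2(a))\geq 2$ along an arithmetic progression of $a$ is false: $F_1\otimes F_2$ decomposes into finitely many irreducible summands $E_{\mu_l}$, and for each of these, Borel--Bott--Weil gives $H^1(X,E_{\mu_l}(a))\neq 0$ only when $\mu_l+a\lambda_k+\rho$ is regular of index exactly $1$; since the pairings $(\mu_l+a\lambda_k+\rho,\alpha)$ are strictly increasing in $a$ for $\alpha\in\Phi^+_{k,G}$ and constant positive otherwise, this happens for at most a bounded range of integers $a$ (alternatively, Serre vanishing for $a\gg 0$ and Serre duality for $a\ll 0$ kill $H^1$). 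Hence only finitely many admissible twists exist and $r$ cannot be pushed to infinity.

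The paper circumvents both obstructions by never twisting: since $\dim\mathbb{P}(\textup{Ext}^1(F_2^\vee,F_1))\geq 3$, one can pick arbitrarily many pairwise non-weakly-equivalent nontrivial extensions $E_i$ of $F_2^\vee$ by $F_1$; Proposition \ref{weakeq} itself then guarantees these $E_i$ are simple with $\textup{Hom}(E_i,E_j)=0$ for $i\neq j$, and a diagram chase on the two defining short exact sequences, using $h^0(X,F_1^\vee\otimes F_2^\vee)=0$ and $h^1(X,F_1\otimes F_2)\geq 4$ from Proposition \ref{F1F2}, yields $\dim\textup{Ext}^1(E_i,E_j)\geq 2$. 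Feeding $E_1,\dots,E_{r+1}$ back into Proposition \ref{weakeq} then produces the $r$-dimensional families of simple (hence indecomposable) ACM bundles. Your baseline case $r=1$ agrees with the paper, but to repair your argument you would have to replace the twisted copies $F_1(a_i)$ by building blocks of this second-generation type.
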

 \begin{proof}
 We want to use Proposition \ref{weakeq} to construct $r$-dimensional families of non-isomorphic indecomposable ACM sheaves for any $r$. The proof is similar to Theorem 4.6 in \cite{costa2016homogeneous}.
By Proposition \ref{weakeq} and the fact that the extension of ACM bundles are ACM, it is sufficient to find $r+1$ simple ACM bundles $E_1,\dots, E_{r+1}$ satisfying $\textup{Hom}(E_i,E_j)=0 ~(i\neq j)$ and $\dim \textup{Ext}^1(E_{r+1},E_i)\geq2~(i=1,\dots,r).$

Let $F_1$ and $F_2$ be two vector bundles as in Proposition \ref{F1F2}.
Since $\dim \textup{Ext}^1(F_2^\vee,F_1)=h^1(X,F_1\otimes F_2)\geq 4$ by Proposition \ref{F1F2}, we have $\dim \mathbb{P}(\textup{Ext}^1(F_2^\vee,F_1))\geq 3$. Then we can choose a sequence of nontrivial non-weakly equivalent vector bundles $\{E_i\}$ in $\mathbb{P}(\textup{Ext}^1(F_2^\vee,F_1) $ by Proposition \ref{weakeq}. Note that the non-weakly equivalence implies that $\textup{Hom}(E_i,E_j)=0~(i\neq j)$. Nontriviality of $E_i$ implies that $ E_i $ are simple. 
It remains to show that $\dim \textup{Ext}^1(E_i,E_j)\geq2~(i\neq j).$

To this end, consider the two exact sequences defining $E_i$ and $E_j~(i\neq j)$ respectively,
\begin{equation}\label{equation1}
0\to F_1\to E_i\to F_2^\vee\to0,
\end{equation}
\begin{equation}\label{equation2}
0\to F_1\to E_j\to F_2^\vee\to0.
\end{equation}

 Applying $\textup{Hom}(E_i,-)$ to (\ref{equation2}),  we obtain the long exact sequence ($\textup{Hom}(E_i,E_j)=0~(i\neq j)$)
 \[0\to\textup{Hom} (E_i,F_2^\vee)\to \textup{Ext}^1(E_i,F_1)\xrightarrow{\tau}\textup{Ext}^1(E_i,E_j)\to \cdots.\]
Then
 \begin{equation}\label{eq1}
  \dim \textup{Ext}^1(E_i,E_j)\geq \dim \textup{Im}(\tau)=\dim \textup{Ext}^1(E_i,F_1)-\dim \textup{Hom}(E_i,F_2^\vee).
\end{equation}

 Applying $\textup{Hom}(-,F_2^\vee)$ to (\ref{equation1}), we get:
\[0\to \textup{Hom}(F_2^\vee,F_2^\vee)\to \textup{Hom}(E_i,F_2^\vee)\to \textup{Hom}(F_1,F_2^\vee)\to \cdots. \]
Since $F_2$ is simple and $\dim \textup{Hom}(F_1,F_2^\vee)=h^0(X,F_1^\vee\otimes F_2^\vee))=0$ (see Proposition \ref{F1F2}), we have
\begin{equation}\label{eq2}
 \dim\textup{Hom}(E_i,F_2^\vee)=\dim\textup{Hom}(F_2^\vee,F_2^\vee)=1.
\end{equation}

On the other hand, applying $\textup{Hom}(-,F_1)$ to (\ref{equation1}), we obtain:
\[0\to
\textup{Hom}(F_2^\vee,F_1) \to \textup{Hom}(E_i,F_1)\to \textup{Hom}(F_1,F_1)\xrightarrow{\eta_1}\textup{Ext}^1(F_2^\vee,F_1)\to\textup{Ext}^1(E_i,F_1)\to \cdots .\]
Then
\[\dim \textup{Ext}^1(E_i,F_1)\geq\dim \textup{Ext}^1(F_2^\vee,F_1)-\dim \textup{Im}(\eta_1)\geq \textup{Ext}^1(F_2^\vee,F_1)-\dim \textup{Hom}(F_1,F_1).\]
Since $F_1$ is simple and $\dim \textup{Ext}^1(F_2^\vee,F_1)=h^1(X,F_1\otimes F_2)\geq 4$ (see Proposition \ref{F1F2}), we have $\dim \textup{Ext}^1(E_i,F_1)\geq3$. This together with $(\ref{eq1})$ and $(\ref{eq2})$ give us
$\dim \textup{Ext}^1(E_i,E_j)\geq2.$

 \end{proof}

 \appendix
{\centering\section*{Appendix}}

In fact, with the help of a computer, we can list all irreducible homogenenous ACM bundles on exceptional Grassmannians. The algorithm is similar to the proof of Example \ref{classG}. However, there are so many ACM bundles on $E_n/P(\alpha_k)$~($k< n$) which are hard to list. For example, $E_6/P(\alpha_4)$ has 830 initialized irreducible homogeneous ACM bundles. Hence we only list the rest cases below.

 Let $E_\lambda$ be an initialized irreducible homogenenous vector bundle with highest weight $\lambda=b_1\lambda_1+\dots+b_n\lambda_n$.
 Then $E_\lambda$ is an ACM bundle if and only if the coefficient $b_i$ is of the the following form.
\begin{enumerate}
\item For $G_2/P(\alpha_1)$,
     \begin{center}

                   \centering
\begin{tabular}{|c|c| }
  \hline
  &	1		\\
  \hline
$b_1$&0	\\
$b_2$&0\\
 \hline

\end{tabular}
\end{center}
\item For $G_2/P(\alpha_2)$,
     \begin{center}

                   \centering
\begin{tabular}{|c|c|c|c| }
  \hline
 &	1	&	2	&	3	\\
  \hline
$b_1$&0&1&2	\\
$b_2$&0&0&0\\
 \hline

\end{tabular}
\end{center}
    \item    For $F_4/P(\alpha_1)$,
     \begin{center}

                   \centering
\begin{tabular}{|c|c|c|c|c|c|c|c|c| c|c| }
  \hline
 &	1	&	2	&	3	&	4	&	5	&	6	&	7	&	8	&	9	&	10	\\
  \hline
$b_1$	&	0	&	0	&	0	&	0	&	0	&	0	&	0	&	0	&	0	&	0	\\
$b_2$	&	0	&	0	&	0	&	0	&	0	&	0	&	0	&	0	&	0	&	0	\\
$b_3$	&	1	&	0	&	1	&	0	&	1	&	0	&	1	&	0	&	1	&	0	\\
$b_4$	&	0	&	1	&	1	&	2	&	2	&	3	&	3	&	4	&	5	&	0	\\ \hline

\end{tabular}
\end{center}

\item  For $F_4/P(\alpha_2)$,

   \begin{center}

                   \centering
\begin{tabular}{|c|c|c|c|c|c|c|c|c|c|c|c|c|c|c|c|c|}
  \hline
  &1&2&3&4&5&6&7&8&9&10 &11&12&13&14&15&16\\
\hline
$b_1$	&	1	&	2	&	3	&	4	&	5	&	0	&	1	&	2	&	3	&	4	&	5	&	0	&	0	&	0	&	1	&	2\\
$b_2$	&	0	&	0	&	0	&	0	&	0	&	0	&	0	&	0	&	0	&	0	&	0	&	0	&	0	&	0	&	0	&	0\\
$b_3$	&	0	&	0	&	0	&	0	&	0	&	1	&	1	&	1	&	1	&	1	&	1	&	2	&	3	&	0	&	0	&	0\\
$b_4$	&	0	&	0	&	0	&	0	&	0	&	0	&	0	&	0	&	0	&	0	&	0	&	0	&	0	&	1	&	1	&	0\\

\hline
&17&18 &19 &20 &21&22&23&24&25&26&27&28&29&30&31&32 \\
\hline
$b_1$	&	3	&	4	&	5	&	6	&	0	&	1	&	2	&	3	&	4	&	5	&	6	&	7	&	8	&	0	&	0	&	0	\\
$b_2$	&	0	&	0	&	0	&	0	&	0	&	0	&	0	&	0	&	0	&	0	&	0	&	0	&	0	&	0	&	0	&	0	\\
$b_3$	&	0	&	0	&	0	&	0	&	1	&	1	&	1	&	1	&	1	&	1	&	1	&	1	&	1	&	2	&	3	&	0	\\
$b_4$	&	1	&	1	&	1	&	1	&	1	&	1	&	1	&	1	&	1	&	1	&	1	&	1	&	1	&	1	&	1	&	2	\\ \hline

&33&34 &35 &36 &37&38&39&40& &  & & &&&& \\\hline
$b_1$	&		0	&	0	&	0	&	0	&	0	&	0	&	0	&	0	& &  & & &&&& \\
$b_2$	&		0	&	0	&	0	&	0	&	0	&	0	&	0	&	0	& &  & & &&&& \\
$b_3$	&		1	&	2	&	3	&	0	&	1	&	2	&	3	&	0	& &  & & &&&& \\
$b_4$	&		2	&	2	&	2	&	3	&	3	&	3	&	3	&	0	& &  & & &&&& \\ \hline

\end{tabular}
\end{center}

\item   For $F_4/P(\alpha_3)$,
 \begin{center}

                   \centering
\begin{tabular}{|c|c|c|c|c|c|c|c|c|c|c|c|c| }
  \hline
 	&1&2&3&4&5&6&7&8&9&10 &11&12 	\\
  \hline
$b_1$	&	1	&	0	&	1	&	0	&	0	&	0	&	0	&	0	&	0	&	0	&	0	&	0	\\
$b_2$	&	0	&	1	&	1	&	0	&	0	&	0	&	0	&	0	&	0	&	0	&	0	&	0	\\
$b_3$	&	0	&	0	&	0	&	0	&	0	&	0	&	0	&	0	&	0	&	0	&	0	&	0	\\
$b_4$	&	0	&	0	&	0	&	1	&	2	&	3	&	4	&	5	&	6	&	7	&	8	&	0	\\\hline

\end{tabular}
\end{center}
\item   For $F_4/P(\alpha_4)$,
\begin{center}

                   \centering
\begin{tabular}{|c|c|c|c| }
  \hline
  &1&2&3  \\
  \hline
$b_1$	&	1	&	2	&	0	\\ \hline
$b_2$	&	0	&	0	&	0	\\ \hline
$b_3$	&	0	&	0	&	0	\\ \hline
$b_4$	&	0	&	0	&	0	\\ \hline

\end{tabular}
\end{center}
\item   For $E_6/P(\alpha_6)$,
 \begin{center}

                   \centering
\begin{tabular}{|c|c|c|c|c|c|c|c| }
  \hline
 &$b_1$ &$b_2$&$b_3$&$b_4$&$b_5$&$b_6$ \\
  \hline
 	1&0	&	0	&	0	&	0	&	0&0	\\ \hline
2 &1	&	0	&	0	&	0	&	0	&	0	\\ \hline
3 & 2	&	0	&	0	&	0	&	0	&	0	\\ \hline
4 &3	&	0	&	0	&	0	&	0	&	0	\\ \hline
5 &0	&	0	&	0	&	1	&	0&	0	\\ \hline
6 &1	&	0	&	1	&	0	&	0	&	0	\\ \hline
7 & 2	&	0	&	1	&	0	&	0	&	0	\\ \hline
8 & 3	&	0	&	1	&	0	&	0	&	0	\\ \hline
\end{tabular}
\end{center}

\item   For $E_7/P(\alpha_7)$,
 \begin{center}

                   \centering
\begin{tabular}{|c|c|c|c|c|c|c|c| }
  \hline
 &$b_1$ &$b_2$&$b_3$&$b_4$&$b_5$&$b_6$&$b_7$ \\
  \hline
1&0	&	1	&	0	&	0	&	0	&	0	&	0	\\ \hline
2&0	&	2	&	0	&	0	&	0	&	0	&	0	\\ \hline
3&0	&	0	&	0	&	0	&	0	&	0	&	0	\\ \hline
\end{tabular}
\end{center}

\item   For $E_8/P(\alpha_8)$,
 \begin{center}

                   \centering
\begin{tabular}{|c|c|c|c|c|c|c|c|c| }
  \hline
 &$b_1$ &$b_2$&$b_3$&$b_4$&$b_5$&$b_6$&$b_7$ &$b_8$\\
  \hline
1&1	&	0	&	0	&	0	&	0	&	0	&	0	&	0	\\ \hline
2&2	&	0	&	0	&	0	&	0	&	0	&	0	&	0	\\ \hline
3&3	&	0	&	0	&	0	&	0	&	0	&	0	&	0	\\ \hline
4&4	&	0	&	0	&	0	&	0	&	0	&	0	&	0	\\ \hline
5&5	&	0	&	0	&	0	&	0	&	0	&	0	&	0	\\ \hline
6&0	&	1	&	0	&	0	&	0	&	0	&	0	&	0	\\ \hline
7&1	&	1	&	0	&	0	&	0	&	0	&	0	&	0	\\ \hline
8&2	&	1	&	0	&	0	&	0	&	0	&	0	&	0	\\ \hline
9&3	&	1	&	0	&	0	&	0	&	0	&	0	&	0	\\ \hline
10&4	&	1	&	0	&	0	&	0	&	0	&	0	&	0	\\ \hline
11&0	&	2	&	0	&	0	&	0	&	0	&	0	&	0	\\ \hline
12&0	&	0	&	1	&	0	&	0	&	0	&	0	&	0	\\ \hline
13&1	&	0	&	1	&	0	&	0	&	0	&	0	&	0	\\ \hline
14&2	&	0	&	1	&	0	&	0	&	0	&	0	&	0	\\ \hline
15&3	&	0	&	1	&	0	&	0	&	0	&	0	&	0	\\ \hline
16&4	&	0	&	1	&	0	&	0	&	0	&	0	&	0	\\ \hline
17&5	&	0	&	1	&	0	&	0	&	0	&	0	&	0	\\ \hline
18&0	&	0	&	0	&	0	&	0	&	0	&	0	&	0	\\ \hline
\end{tabular}
\end{center}
\end{enumerate}

\bibliography{ref}

\end{document}